\def\th@plain{%
  \upshape 
}
\renewenvironment{proof}[1][\proofname]{\par
  \pushQED{\qed}%
  \normalfont \topsep6\p@\@plus6\p@\relax
  \trivlist
  \item[\hskip\labelsep
        \bfseries
    #1\@addpunct{.}]\ignorespaces
}{%
  \popQED\endtrivlist\@endpefalse
}
\newtheorem{theorem}{Theorem}
\numberwithin{theorem}{section}
\newtheorem{lemma}{Lemma}
\newtheorem{conjecture}{Conjecture}
\newtheorem*{conjecture*}{Conjecture}
\newtheorem{claim}{Claim}
\newtheorem{fact}{Fact}
\theoremstyle{definition}
\newcounter{Hcase}
\newcounter{Hclaim}
\newcommand{\resetcounter}{\stepcounter{Hcase}\setcounter{case}{0}\stepcounter{Hclaim}\setcounter{claim}{0}}
\newcommand{\etal}{et~al.\ }
\def\int(#1){\mathrm{int}(#1)}
\def\ext(#1){\mathrm{ext}(#1)}
\def\Int(#1){\mathrm{Int}(#1)}
\def\Ext(#1){\mathrm{Ext}(#1)}
\def\ad(#1){\mathrm{ad}(#1)}
\def\mad(#1){\mathrm{mad}(#1)}
\def\la(#1){\mathrm{la}(#1)}
\newcommand{\Lfloor}{\left\lfloor}
\newcommand{\Rfloor}{\right\rfloor}
\newcommand{\mul}{\mathrm{mul}}
\begin{document}
\title{Acyclic chromatic index of triangle-free $1$-planar graphs}
\author{Jijuan Chen\qquad Tao Wang\footnote{{\tt Corresponding
author: wangtao@henu.edu.cn}}  \qquad Huiqin Zhang\\
{\small Institute of Applied Mathematics}\\
{\small Henan University, Kaifeng, 475004, P. R. China}}
\date{August 20, 2016}
\maketitle
\begin{abstract}
An acyclic edge coloring of a graph $G$ is a proper edge coloring such that every cycle is colored with at least three colors. The acyclic chromatic index $\chiup_{a}'(G)$ of a graph $G$ is the least number of colors in an acyclic edge coloring of $G$. It was conjectured that $\chiup'_{a}(G)\leq \Delta(G) + 2$ for any simple graph $G$ with maximum degree $\Delta(G)$. A graph is {\em $1$-planar} if it can be drawn on the plane such that every edge is crossed by at most one other edge. In this paper, we prove that every triangle-free $1$-planar graph $G$  has an acyclic edge coloring with $\Delta(G) + 16$ colors. 

{\bf Keywords:} Acyclic edge coloring; Acyclic chromatic index; $\kappa$-deletion-minimal graph; $1$-planar graph

MSC: 05C15
\end{abstract}

\section{Introduction}
All graphs considered in this paper are simple, undirected and finite. For a plane graph $G$, we use $F(G)$ to denote the face set of $G$. In a plane graph $G$, the degree of a face $f$, denoted $\deg(f)$, is the length of the boundary walk. A $k$-vertex, $k^{-}$-vertex and $k^{+}$-vertex is a vertex with degree $k$, at most $k$ and at least $k$, respectively. Analogously, we can define a $k$-face, $k^{-}$-face and $k^{+}$-face. 

An acyclic edge coloring of a graph $G$ is a proper edge coloring such that every cycle is colored with at least three colors. In other words, the union of any two color classes induces a subgraph such that every component is a path. The acyclic chromatic index $\chiup_{a}'(G)$ of a graph $G$ is the least number of colors in an acyclic edge coloring of $G$. 

By Vizing's theorem, the acyclic chromatic index of $G$ has a trivial lower bound $\Delta(G)$. Fiam\v{c}{\'i}k \cite{MR526851} stated the following conjecture in 1978, which is well known as Acyclic Edge Coloring Conjecture, and Alon \etal \cite{MR1837021} restated it in 2001.

\begin{conjecture}\label{graph}
For any graph $G$, $\chiup'_{a}(G)\leq \Delta(G) + 2$.
\end{conjecture}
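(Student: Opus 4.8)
The plan is to prove the bound by the probabilistic method, since for an arbitrary graph $G$ there is no planarity, bounded-genus, or girth hypothesis available to drive a structural or discharging argument. I would color the edges of $G$ with a fixed palette --- either all at once by independent uniform random choices, or sequentially --- and then show that with positive probability the resulting coloring is simultaneously proper and acyclic.

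I would organise the requirements into two families of bad events. A properness event records that two adjacent edges receive equal colors, and an acyclicity event records that some cycle $C$ is bichromatic, meaning its edges use only two colors. Because a proper coloring of a cycle with two colors must alternate, every bichromatic cycle has even length; the $4$-cycles are the shortest and most abundant such cycles, so in a first pass I would aim to forbid all short bichromatic cycles directly and treat the longer ones by a separate tail estimate.

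Next I would invoke the Lov\'asz Local Lemma. The color chosen on an edge $e$ influences only those events that involve an edge sharing an endpoint with $e$ or lying on a common cycle with $e$, so each bad event is mutually independent of all but a bounded (in terms of $\Delta$ and the relevant cycle length) collection of the others. Assigning, via the asymmetric local lemma, a weight to each event that decays geometrically in the length of the associated cycle, and taking the palette size to be a suitable constant multiple of $\Delta$, would make the product condition of the local lemma hold and hence produce a proper acyclic edge coloring.

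The hard part --- indeed the reason the statement remains a conjecture rather than a theorem --- is that this probabilistic machinery, together with its entropy-compression refinements, yields only a bound of the shape $c\Delta$ for a constant $c>1$ (the best constants known are below $4$), and never the exact additive bound $\Delta+2$. The slack accumulated in summing the acyclicity events over all even cycle lengths simply cannot be compressed down to two spare colors. Closing this gap would require either a dramatically sharper local analysis that tracks the colors available at each edge with essentially no loss, or a reduction of the general problem to tightly structured graphs on which a minimal-counterexample and discharging argument --- of the kind this paper carries out for triangle-free $1$-planar graphs --- can be pushed through. Crossing that divide is precisely the open heart of the conjecture.
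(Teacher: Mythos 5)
You have not given a proof; you have given an accurate explanation of why no proof is known, and you should be clear-eyed that this is what your text amounts to. The statement you were asked to prove is the Acyclic Edge Coloring Conjecture of Fiam\v{c}{\'i}k and of Alon, Sudakov and Zaks, and the paper itself presents it only as Conjecture~\ref{graph} --- an open problem it does not claim to resolve. What the paper actually proves is the much weaker Theorem~\ref{M}, namely $\chi_{a}'(G) \leq \Delta(G) + 16$ for the restricted class of triangle-free $1$-planar graphs, via a $\kappa$-deletion-minimal counterexample and a discharging argument on the associated plane graph. So there is no ``paper's own proof'' of the statement for your attempt to be compared against, and no blind attempt could legitimately have produced one.

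The concrete gap in your proposal is exactly the one you name yourself in the final paragraph: the Lov\'asz Local Lemma machinery, applied to the two families of bad events you describe (properness violations and bichromatic cycles), forces the palette size to be a constant multiple of $\Delta$ strictly greater than $\Delta$ itself. Summing the local-lemma weights over bichromatic cycles of every even length leaves multiplicative slack that no known refinement --- not Molloy--Reed's $16\Delta$, not Esperet--Parreau's entropy-compression bound $4\Delta - 4$, not the current record $\lceil 3.74(\Delta - 1)\rceil$ of Giotis et al.\ cited in the paper --- compresses to an additive constant. Your step ``taking the palette size to be a suitable constant multiple of $\Delta$ \ldots\ would make the product condition hold'' is correct as far as it goes, but it proves $\chi_{a}'(G) \leq c\Delta$ for some $c > 1$, which is a genuinely different and weaker statement than $\chi_{a}'(G) \leq \Delta + 2$; the only additive results known (e.g.\ Alon--Sudakov--Zaks for girth $\Omega(\Delta \log \Delta)$, or the planar and $1$-planar results surveyed and extended in this paper) all require strong structural hypotheses absent from the conjecture. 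Since you candidly identify this obstruction rather than paper over it, your write-up is honest and essentially correct as a survey of the state of the art --- but as a proof of the statement it fails at the decisive quantitative step, and necessarily so given present knowledge.
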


Alon \etal \cite{MR1109695} proved that $\chiup'_{a}(G) \leq 64 \Delta(G)$ by using probabilistic method. Molloy and Reed \cite{MR1715600} improved it to $\chiup'_{a}(G) \leq 16 \Delta(G)$. Ndreca \etal \cite{MR2864444} improved the upper bound to $\lceil 9.62(\Delta(G)-1)\rceil$. Recently, Esperet and Parreau \cite{MR3037985} further improved it to $4\Delta(G) - 4$ by using the so-called entropy compression method. To my knowledge, the best known general bound is $\lceil 3.74(\Delta(G)-1)\rceil$ due to Giotis \etal \cite{MR3448625}. Alon \etal \cite{MR1837021} proved that there is a constant $c$ such that $\chiup'_{a}(G)\leq \Delta(G) + 2$ for a graph $G$ whenever the girth is at least $c\Delta \log\Delta$.

Regarding general planar graph $G$, Fiedorowicz \etal \cite{MR2458434} proved that $\chiup'_{a}(G)\leq 2\Delta(G)+29$, and Hou \etal \cite{MR2491777} proved that $\chiup'_{a}(G)\leq \max\{2\Delta(G)-2, \Delta(G)+22\}$. Recently, Basavaraju \etal \cite{MR2817509} showed that $\chiup_{a}'(G) \leq \Delta(G) + 12$, and Guan \etal \cite{MR3031510} improved it to $\chiup_{a}'(G) \leq \Delta(G) + 10$, and Wang \etal \cite{MR2994403} further improved it to $\chiup_{a}'(G) \leq \Delta(G) + 7$. The current best upper bound is $\Delta(G) + 6$ by Wang and Zhang \cite{MR3453934}. 

A graph is {$1$-planar} if it can be drawn on the plane such that every edge crosses at most one other edge. Obviously, the class of $1$-planar graphs is a larger class than the one of planar graphs. The concept of $1$-planar graph was introduced by Ringel \cite{MR0187232} in 1965, while he simultaneously colored the vertices and faces of a plane graph such that any pair of adjacent/incident elements receive distinct colors.

The Acyclic Edge Coloring Conjecture was verified for the triangle-free planar graphs, see \cite{MR3065111, MR3166127}. Recently, Song and Miao \cite{MR3397083} firstly studied the acyclic chromatic index of triangle-free $1$-planar graphs, and gave the following result. 
\begin{theorem}[Song and Miao \cite{MR3397083}]\label{Song}
If $G$ is a triangle-free $1$-planar graph, then $\chiup_{a}'(G) \leq \Delta(G) + 22$. 
\end{theorem}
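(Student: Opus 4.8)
The plan is to argue by contradiction through a minimal counterexample, combining a battery of reducibility lemmas with a discharging argument carried out on a planarization of the $1$-planar drawing. Set $\kappa = \Delta(G) + 22$ and suppose that some triangle-free $1$-planar graph cannot be acyclically edge-colored with $\kappa$ colors. Among all such graphs I would choose one, $G$, that is $\kappa$-deletion-minimal: $\chiup'_{a}(G) > \kappa$, yet every proper subgraph of $G$ admits an acyclic edge coloring with $\kappa$ colors. Fix a $1$-planar drawing of $G$ and form the associated plane graph $G^{\times}$ by replacing each crossing point with a new dummy $4$-vertex; the true vertices retain their original degrees and each dummy vertex contributes degree $4$. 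Euler's formula applied to $G^{\times}$ gives $\sum_{x}(\deg(x)-4) + \sum_{f}(\deg(f)-4) = -8$, where $x$ ranges over $V(G^{\times})$ and $f$ over its faces. This global deficiency of $-8$ is what the discharging must eventually contradict.

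First I would build the reducibility toolkit. The engine is the standard local recoloring analysis: for an edge $uv$ of $G$, minimality gives an acyclic edge coloring $\phi$ of $G - uv$ with $\kappa$ colors, and the goal is to extend $\phi$ to $uv$. A color is unavailable for $uv$ only if it already appears at $u$ or $v$, which accounts for at most $\deg(u) + \deg(v) - 2$ colors, or if assigning it would close a bichromatic cycle through $uv$. The latter is controlled by examining maximal $(i,j)$-bichromatic paths joining $u$ and $v$: each such path forbids at most one color, and the number of relevant paths is bounded in terms of the colors incident to $u$ and $v$. Executing this count, I would establish a list of forbidden configurations for $G$ — sharp lower bounds on the degrees of neighbours of low-degree vertices, and the absence of certain clusters of $2$-, $3$-, and $4$-vertices near one another and near crossings. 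The margin $\Delta + 22$ is precisely what makes each of these local color-counting inequalities go through, and calibrating the configurations so that each is genuinely reducible within that color budget is the delicate bookkeeping.

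Next I would run the discharging. Assign each vertex $x$ the charge $\deg(x) - 4$ and each face $f$ the charge $\deg(f) - 4$, so the total charge is $-8$. Because $G$ is triangle-free, every $3$-face of $G^{\times}$ must be incident with a dummy (crossing) vertex, since a $3$-face using only true vertices and true edges would be a triangle of $G$; the reducible configurations then tightly restrict how small faces and low-degree vertices can cluster. I would design discharging rules sending charge from $5^{+}$-vertices and from large faces to the deficient small faces and the $2$-, $3$-vertices, using the structural lemmas to guarantee that each donor can meet the demand of its recipients. Verifying that every vertex and every face finishes with nonnegative final charge yields a nonnegative total, contradicting the value $-8$ and hence the existence of $G$.

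The main obstacle is the reducibility step, specifically the bichromatic-cycle analysis in the presence of crossings. Unlike in the planar case, a low-degree vertex of $G$ may lie on edges that cross others, so the plane neighbourhood of that vertex in $G^{\times}$ includes dummy vertices whose incident colors also participate in potential bichromatic cycles; bounding the number of colors these forbid, without exhausting the margin furnished by $\Delta + 22$, is where the argument is most technical. Making the family of reducible configurations rich enough to absorb the $-8$ deficiency through discharging, while keeping every member genuinely reducible under the color budget, is the crux on which the whole proof turns.
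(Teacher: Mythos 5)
Your overall strategy---a $\kappa$-deletion-minimal counterexample, critical-path reducibility lemmas, then discharging with charges $\deg-4$ on the planarization using $\sum(\deg(x)-4)+\sum(\deg(f)-4)=-8$---is exactly the template of Song and Miao's proof (this statement is quoted from their paper) and of the present paper's sharper $\Delta(G)+16$ result. But as written your proposal has a genuine gap: it is a plan with placeholders exactly where the mathematics lives. You never state, let alone prove, a single concrete reducible configuration, and you never write down a single discharging rule, so the nonnegativity verification---the only step that can contradict the total charge $-8$---is not performed and cannot be assessed. This is not a formality: the accounting is tight (in the paper's verification of the $\Delta+16$ case, the $2$-vertices, $3$-vertices, $4$-vertices, $5$-vertices and the $19$- to $21$-vertices all finish with final charge exactly $0$), and the thresholds appearing in the rules ($18^{+}$, $19^{+}$, $22^{+}$, etc., all tied to the value of $\kappa-\Delta$) are dictated by precisely which configurations one can prove reducible. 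Your description of the reducibility engine also understates it: the count ``at most $\deg(u)+\deg(v)-2$ unavailable colors plus one forbidden color per bichromatic path'' never suffices by itself when low-degree vertices cluster; the actual lemmas (compare \autoref{K_D_4} or property (C6) in this paper) require iterated recoloring arguments---swapping colors along critical paths, passing to an acyclic coloring of a different deleted edge, choosing $\phi$ to minimize $|\mathcal{U}(u)\cap\mathcal{U}(v)|$, and invoking Fact~\ref{Fact1} to reach contradictions. These multi-stage case analyses, not the size of the color budget per se, constitute the proof, and your sketch defers all of them (as you yourself concede in the closing paragraph).

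One further concrete omission: you fix an arbitrary $1$-planar drawing, but the discharging needs the drawing chosen with the \emph{minimum number of crossings}. That minimality is what yields the paper's first claim---every $2$-vertex is incident with two $4^{+}$-faces in the planarization---and hence the bound of $\left\lfloor \frac{2(\ell-\lambda)}{3}\right\rfloor$ on the number of $3$-faces at an $\ell$-vertex with $\lambda$ adjacent $2$-vertices. Without it, a $2$-vertex of $G$ may lie on a $3$-face of the planarization, and the charge balance at $2$-vertices and at large vertices adjacent to them fails. Your observation that triangle-freeness forces every $3$-face of the planarization to contain a dummy vertex is correct and is indeed used (it is why each $3$-face can collect $\frac{1}{2}$ from each of only two non-crossing vertices and still reach charge $0$), but it is one of several structural facts required, and the rest must be proved rather than postulated.
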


Note that the upper bound $\Delta(G) + 22$ is far from the conjectured bound $\Delta(G) + 2$. In this paper, we improve the upper bound to $\Delta(G) + 16$, and we believe it can be further improved.  
\begin{theorem}\label{M}
If $G$ is a triangle-free $1$-planar graph, then $\chiup_{a}'(G) \leq \Delta(G) + 16$. 
\end{theorem}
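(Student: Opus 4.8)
The plan is to argue by contradiction through a minimal counterexample, using the $\kappa$-deletion-minimal framework with $\kappa = \Delta(G) + 16$. Call a graph $G$ \emph{$\kappa$-deletion-minimal} if $\chiup_a'(G) > \kappa$ while $\chiup_a'(H) \le \kappa$ for every proper subgraph $H$ of $G$. If the theorem fails there is a triangle-free $1$-planar graph that is $(\Delta+16)$-deletion-minimal, and it suffices to show no such graph exists. I would first collect the elementary consequences of deletion-minimality: $G$ is connected, $\delta(G) \ge 2$, and (by deleting an edge $e$, extending an acyclic edge coloring of $G-e$, and counting the obstructions to putting $e$ back) every edge and every low-degree vertex is subject to strong local restrictions.

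The core of the argument is a list of \emph{reducible configurations}. For an uncolored edge $uv$ in a partial acyclic coloring of $G - uv$, a color is forbidden either because it already appears at $u$ or $v$ (the proper constraint, costing at most $\deg(u)+\deg(v)-2$ colors) or because assigning it would close a bichromatic $(u,v)$-path into a two-colored cycle. The standard counting shows the second type of obstruction is controlled by the colors on edges incident to $u$ and $v$ together with their partners along bichromatic paths, so that whenever $\deg(u)+\deg(v)$ is small relative to $\kappa$ a valid color survives. I would push these recoloring arguments, interchanging colors along maximal bichromatic paths to free up a color, to rule out in a $(\Delta+16)$-deletion-minimal graph configurations such as two adjacent $2$-vertices, a $2$- or $3$-vertex whose neighbours are not sufficiently large, and the various local patterns around a crossing. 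This is the step I expect to be the main obstacle: the recoloring analysis must simultaneously avoid creating improper colorings and bichromatic cycles, and the configurations have to be chosen delicately enough both to be provably reducible and to be dense enough to feed the discharging.

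With the reducible configurations in hand, I would turn to the drawing. Fix a $1$-planar drawing of $G$ and form the associated plane graph $G^{\times}$ by turning each crossing into a new $4$-vertex (a \emph{crossing vertex}), keeping the original vertices as \emph{true vertices}. Since $G$ is triangle-free, the possible local pictures around a crossing are limited, which bounds the sizes of the faces incident to crossing vertices. I would then run a discharging argument on $G^{\times}$ using Euler's formula in the form $\sum_{v}(\deg(v)-4) + \sum_{f}(\deg(f)-4) = -8$, assigning initial charge $\deg(x)-4$ to each vertex and face. Crossing vertices and the smallest faces carry negative charge, while true vertices of large degree and large faces carry positive charge; the discharging rules would send charge from high-degree true vertices and big faces, across the reducible-configuration-free structure, to the deficient elements.

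The contradiction comes from showing that after discharging every vertex and every face of $G^{\times}$ has nonnegative final charge, so the total is nonnegative, contradicting the value $-8$. The known linear density bound for triangle-free $1$-planar graphs (at most $3n-8$ edges) is what makes the charge balance feasible, since it forces enough small-degree vertices for the reducibility lemmas to bite. Beyond the recoloring analysis noted above, the secondary difficulty is the bookkeeping for crossing vertices: because a crossing vertex behaves like a degree-$4$ element with no reservoir of charge, the rules must guarantee that each crossing and its incident faces absorb exactly enough charge from the surrounding true vertices, and verifying this case-by-case against every admissible local configuration is where the computation is heaviest.
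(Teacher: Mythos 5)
Your overall strategy---minimal counterexample, the $\kappa$-deletion-minimal framework with $\kappa=\Delta(G)+16$, recoloring along maximal bichromatic paths to establish reducible configurations, passing to the associated plane graph by planarizing crossings, and discharging from the Euler identity $\sum_v(\deg(v)-4)+\sum_f(\deg(f)-4)=-8$---is exactly the route the paper takes. But as written the proposal is an outline rather than a proof: the two steps you yourself flag as ``the main obstacle'' and ``where the computation is heaviest'' are precisely the content of the argument, and they are not carried out. The configurations you name (two adjacent $2$-vertices, low-degree vertices with small neighbours) are far too weak to balance the charges at $\kappa=\Delta+16$. What the paper actually needs, and proves or imports, is a much sharper list: every $2$-vertex has two $20^{+}$-neighbours, every $3$-vertex has three $18^{+}$-neighbours and an $18$-neighbour sees at most one $3$-vertex, every vertex has at least two $4^{+}$-neighbours (a new lemma whose proof is a multi-page case analysis on critical paths), a degree-sum condition on the neighbourhood of each $4$-vertex, and the new reducibility of a $5$-vertex with three $7^{-}$-neighbours (proved via a multiset-counting argument that finds a color appearing only once in $\Upsilon(v,v_1)\uplus\cdots\uplus\Upsilon(v,v_4)$ and uses minimality of $|\mathcal{U}(u)\cap\mathcal{U}(v)|$ over all colorings of $G-uv$). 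Without configurations of this precise strength, no choice of discharging rules can make every vertex class nonnegative, so the gap is not merely one of detail.

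A second, smaller gap: you appeal to the edge bound $3n-8$ for triangle-free $1$-planar graphs to argue the charge balance is ``feasible,'' but the actual proof never uses a global density bound; instead it needs two local facts about the drawing chosen with fewest crossings, namely that every $2$-vertex lies on two $4^{+}$-faces of $G^{\dagger}$ and that an $\ell$-vertex with $\lambda$ neighbours of degree $2$ is incident with at most $\lfloor 2(\ell-\lambda)/3\rfloor$ triangular faces of $G^{\dagger}$. These bounds on $3$-faces at each vertex are what let the high-degree vertices pay $\frac12$ to each incident $3$-face and still afford the transfers to $2$-, $3$-, $4$- and $5$-vertices. To complete the proof along your lines you would need to state and verify an explicit rule set (the paper's rules send $1$ to $2$-vertices, $\frac32$ or $\frac12$ to $3$-vertices depending on whether the donor has degree $18$ or $19^{+}$, $\frac12$ to $3$-faces, $\frac14$ or $\frac13$ to $4$-vertices, and $\frac16$ to $5$-vertices) and check every degree class $\ell=2,\dots$ and every face class, which is where the constants $16$, $18$, $20$, $22$ are actually forced.
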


\section{Preliminary and structural results}
Let $\mathbb{S}$ be a multiset and $x$ be an element in $\mathbb{S}$. The {\em multiplicity} $\mul_{\mathbb{S}}(x)$ is the number of times $x$ appears in $\mathbb{S}$. Let $\mathbb{S}$ and $\mathbb{T}$ be two multisets. The union of $\mathbb{S}$ and $\mathbb{T}$, denoted by $\mathbb{S} \uplus \mathbb{T}$, is a multiset with $\mul_{\mathbb{S} \uplus \mathbb{T}}(x) = \mul_{\mathbb{S}}(x) + \mul_{\mathbb{T}}(x)$. A graph $G$ with maximum degree at most $\kappa$ is {\em $\kappa$-deletion-minimal} if $\chiup_{a}'(G) > \kappa$ and $\chiup_{a}'(H) \leq \kappa$ for every proper subgraph $H$ of $G$.  

A {\em partial acyclic edge coloring} of $G$ is an acyclic edge coloring of any subgraph of $G$. Let $\phi$ be a partial acyclic edge coloring of $G$. Let $\mathcal{U}_{\phi}(v)$ denote the set of colors used on the edges incident with $v$. Let $A_{\phi}(v) = \{1, 2, \dots, \kappa\} \setminus \mathcal{U}_{\phi}(v)$ and $A_{\phi}(uv) = \{1, 2, \dots, \kappa\} \setminus (\mathcal{U}_{\phi}(u) \cup \mathcal{U}_{\phi}(v))$. Let $\Upsilon_{\phi}(u, v) = \mathcal{U}_{\phi}(v) \setminus \{\phi(uv)\}$ and $W_{\phi}(u, v) = \{\,u_{i} \mid uu_{i} \in E(G) \mbox{ and } \phi(uu_{i}) \in \Upsilon(u, v)\,\}$. Notice that $W_{\phi}(u, v)$ may be not the same as $W_{\phi}(v, u)$. For simplicity, we will omit the subscripts if no confusion can arise. 

An $(\alpha, \beta)$-maximal bichromatic path with respect to $\phi$ is a maximal path whose edges are colored by $\alpha$ and $\beta$ alternately. An $(\alpha, \beta, u, v)$-critical path with respect to $\phi$ is an $(\alpha, \beta)$-maximal bichromatic path which starts at $u$ with $\alpha$ and ends at $v$ with $\alpha$. An $(\alpha, \beta, u, v)$-alternating path with respect to $\phi$ is an $(\alpha, \beta)$-bichromatic path starting at $u$ with $\alpha$ and ending at $v$ with $\beta$.

A color $\alpha$ is {\em available} for an edge $e$ in $G$ with respect to a partial edge coloring of
$G$ if none of the adjacent edges of $e$ is colored with $\alpha$.  An available color $\alpha$ is {\em valid} for an edge $e$ if assigning the color $\alpha$ to $e$ does not result in any bichromatic cycle in $G$.

\begin{fact}[Basavaraju \etal \cite{MR2817509}]\label{Fact1}%
Given a partial acyclic edge coloring of $G$ and two colors $\alpha, \beta$, there exists at most one $(\alpha, \beta)$-maximal bichromatic  path containing a particular vertex $v$. \qed
\end{fact}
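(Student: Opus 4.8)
The plan is to reduce the statement to an elementary structural fact about the subgraph spanned by the two relevant colors. Given the partial acyclic edge coloring $\phi$ and the pair of colors $\alpha,\beta$, I would first introduce the spanning subgraph $H$ of $G$ whose edge set consists of exactly those edges $e$ with $\phi(e)\in\{\alpha,\beta\}$. Every $(\alpha,\beta)$-maximal bichromatic path with respect to $\phi$ is a subgraph of $H$, so it suffices to understand the connected components of $H$ and to locate the vertex $v$ among them.

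The key step is to observe that $H$ has maximum degree at most $2$. Since $\phi$ is a proper edge coloring, each vertex $w$ is incident with at most one edge colored $\alpha$ and at most one edge colored $\beta$, whence $\deg_{H}(w)\le 2$. Consequently every connected component of $H$ is either a path or a cycle. A cycle in $H$ would be a cycle of $G$ colored with only the two colors $\alpha$ and $\beta$, contradicting that $\phi$ is acyclic; therefore every component of $H$ is in fact a path. I would then verify that the $(\alpha,\beta)$-maximal bichromatic paths are precisely the path-components of $H$: along such a component the colors $\alpha$ and $\beta$ necessarily alternate, because two consecutive edges share a vertex and so receive distinct colors, and maximality of the path is equivalent to its not being extendable at either end, i.e.\ to its being an entire component. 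Since the components of $H$ are pairwise vertex-disjoint, the vertex $v$ lies in at most one of them, and hence on at most one $(\alpha,\beta)$-maximal bichromatic path.

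There is no genuinely hard step here; the only points that deserve care are the equivalence between maximal bichromatic paths and path-components of $H$, and the use of acyclicity to exclude cycle-components. I would also emphasize that the conclusion is \emph{at most one} rather than \emph{exactly one}, which correctly accounts for the case in which $v$ is incident with neither an $\alpha$-edge nor a $\beta$-edge, so that $v$ is an isolated vertex of $H$ and lies on no such path at all.
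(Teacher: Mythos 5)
Your proof is correct, and it is the standard argument for this fact: the paper itself states it without proof (citing Basavaraju et al.\ with a \qed), and the argument in that source is essentially yours --- the edges colored $\alpha$ or $\beta$ form a subgraph of maximum degree at most $2$ whose components are vertex-disjoint paths once acyclicity rules out bichromatic cycles. You also correctly identify the two points that need care, namely that maximal bichromatic paths coincide with whole path-components and that acyclicity is genuinely needed (in a merely proper coloring a cycle component through $v$ would yield several maximal bichromatic paths containing $v$).
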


\begin{fact}[Basavaraju \etal \cite{MR2817509}]\label{Fact2}%
Let $G$ be a $\kappa$-deletion-minimal graph and $uv$ be an edge of $G$. If $\phi$ is an acyclic edge coloring of $G - uv$, then no available color for $uv$ is valid. Furthermore, if $\mathcal{U}(u) \cap \mathcal{U}(v) = \emptyset$, then $\deg(u) + \deg(v) = \kappa + 2$; if $|\mathcal{U}(u) \cap \mathcal{U}(v)| = s$, then $\deg(u) + \deg(v) + \sum\limits_{w \in W(u, v)} \deg(w) \geq \kappa+2s+2$. \qed
\end{fact}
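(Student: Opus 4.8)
The plan is to derive both the non-validity statement and the degree inequalities directly from the minimality of $G$, after a careful analysis of which bichromatic cycles can possibly appear when we attempt to recolor $uv$. First I would record the elementary counts. Since $\phi$ properly colors $G - uv$ and $uv$ is uncolored, the edges at $u$ (resp.\ $v$) other than $uv$ carry $\deg(u) - 1$ (resp.\ $\deg(v) - 1$) distinct colors, so $|\mathcal{U}(u)| = \deg(u) - 1$ and $|\mathcal{U}(v)| = \deg(v) - 1$; writing $s = |\mathcal{U}(u) \cap \mathcal{U}(v)|$ this gives $|A(uv)| = \kappa - |\mathcal{U}(u) \cup \mathcal{U}(v)| = \kappa - \deg(u) - \deg(v) + s + 2$. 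The first assertion is then immediate: if some available color $\alpha$ were valid, assigning $\alpha$ to $uv$ would extend $\phi$ to an acyclic edge coloring of $G$ using at most $\kappa$ colors, contradicting $\chiup_{a}'(G) > \kappa$. Hence every available color, when placed on $uv$, must close up a bichromatic cycle.

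Next I would pin down the shape of any such cycle. If assigning an available $\alpha$ to $uv$ creates a bichromatic cycle $C$, then, since $\phi$ itself is acyclic, $C$ must traverse the only new edge $uv$; thus $C$ is an $(\alpha, \beta)$-cycle whose two edges meeting $uv$ are both colored $\beta$, forcing $\beta \in \mathcal{U}(u) \cap \mathcal{U}(v)$. The $\beta$-edge of $C$ at $u$ is $uw$ for the unique neighbor $w$ with $\phi(uw) = \beta$, so $w \in W(u, v)$; and walking along $C$ from $u$ through $uw$, the next edge at $w$ alternates to color $\alpha$, whence $\alpha \in \mathcal{U}(w)$. This yields the key containment $A(uv) \subseteq \bigcup_{w \in W(u, v)} \mathcal{U}(w)$. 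I would also note at this point the bookkeeping fact that $|W(u, v)| = s$: by definition $W(u, v)$ consists exactly of the neighbors $w$ of $u$ with $\phi(uw) \in \mathcal{U}(v)$, equivalently $\phi(uw) \in \mathcal{U}(u) \cap \mathcal{U}(v)$, one $w$ for each of the $s$ shared colors.

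For the disjoint case $\mathcal{U}(u) \cap \mathcal{U}(v) = \emptyset$, no color $\beta$ lies in both neighborhoods, so by the cycle analysis no bichromatic cycle can be formed and every available color is valid. Combined with the first assertion this forces $A(uv) = \emptyset$, i.e.\ $\kappa - \deg(u) - \deg(v) + 2 = 0$, which is precisely $\deg(u) + \deg(v) = \kappa + 2$.

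For the general case I would convert the containment into a count. For each $w \in W(u, v)$ every one of its $\deg(w)$ edges is colored (none of them is $uv$, as $w \neq v$), so $|\mathcal{U}(w)| = \deg(w)$; moreover the color $\phi(uw) \in \mathcal{U}(u)$ belongs to $\mathcal{U}(w)$ but cannot be available, so $|A(uv) \cap \mathcal{U}(w)| \leq \deg(w) - 1$. Summing over the $s$ vertices of $W(u, v)$ and using the containment gives $|A(uv)| \leq \sum_{w \in W(u, v)} (\deg(w) - 1) = \sum_{w \in W(u, v)} \deg(w) - s$. Substituting $|A(uv)| = \kappa - \deg(u) - \deg(v) + s + 2$ and rearranging yields $\deg(u) + \deg(v) + \sum_{w \in W(u, v)} \deg(w) \geq \kappa + 2s + 2$. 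The main obstacle, and the step responsible for the sharp constant $2s$ rather than a weaker $s$, is exactly this refinement $|A(uv) \cap \mathcal{U}(w)| \leq \deg(w) - 1$: one must observe that the color $\beta = \phi(uw)$ which certifies $w \in W(u, v)$ is itself barred from $A(uv)$, so each neighborhood $\mathcal{U}(w)$ can absorb at most $\deg(w) - 1$ of the available colors.
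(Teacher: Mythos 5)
Your proof is correct, and there is nothing in the paper to compare it against: Fact~2 is stated without proof, imported verbatim from Basavaraju et al.\ \cite{MR2817509}. Your argument---every available color, being invalid by minimality, must close an $(\alpha,\beta)$-bichromatic cycle through $uv$ with $\beta\in\mathcal{U}(u)\cap\mathcal{U}(v)$, which yields $A(uv)\subseteq\bigcup_{w\in W(u,v)}\bigl(\mathcal{U}(w)\setminus\{\phi(uw)\}\bigr)$ and hence $|A(uv)|=\kappa-\deg(u)-\deg(v)+s+2\leq\sum_{w\in W(u,v)}(\deg(w)-1)$, with the disjoint case forcing $A(uv)=\emptyset$---is exactly the standard proof from that cited source, including the sharp refinement that $\phi(uw)$ itself is excluded from $A(uv)$.
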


We collect some structural lemmas on $\kappa$-deletion-minimal graphs, which are useful for our main result. 
\begin{lemma}\label{delta2}
If $G$ is a $\kappa$-deletion-minimal graph, then $G$ is $2$-connected and $\delta(G) \geq 2$.
\end{lemma}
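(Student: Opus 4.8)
The plan is to rule out, one at a time, every obstruction to $2$-connectivity: a vertex of degree at most $1$, a disconnected graph, and a cut vertex. In each case I exploit minimality by acyclically coloring a proper subgraph with $\kappa$ colors and then extending or merging. I first establish $\delta(G) \geq 2$. Suppose $v$ has $\deg(v) \leq 1$. If $\deg(v) = 0$, then $G - v$ is a proper subgraph with the same edge set, so $\chiup'_{a}(G) = \chiup'_{a}(G - v) \leq \kappa$, a contradiction. If $\deg(v) = 1$ with neighbor $u$, take an acyclic edge coloring $\phi$ of the proper subgraph $G - uv$. In $G - uv$ the vertex $v$ is isolated, so $\mathcal{U}(v) = \emptyset$ and hence $\mathcal{U}(u) \cap \mathcal{U}(v) = \emptyset$; Fact \ref{Fact2} then forces $\deg(u) + \deg(v) = \kappa + 2$, i.e. $\deg(u) = \kappa + 1$, contradicting $\Delta(G) \leq \kappa$. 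Equivalently, the at most $\deg(u) - 1 \leq \kappa - 1$ colors forbidden at $uv$ leave an available color, which is automatically valid because no cycle can traverse the degree-$1$ vertex $v$, so $\phi$ extends to $G$.

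Next I would show $G$ is connected. If not, let $G_{1}, \dots, G_{t}$ with $t \geq 2$ be its components. Each is a proper subgraph, hence admits an acyclic edge coloring with the common palette $\{1, \dots, \kappa\}$. The union is proper, since edges in distinct components share no vertex, and every cycle—in particular every bichromatic one—lies inside a single component; so no bichromatic cycle is created and $\chiup'_{a}(G) \leq \kappa$, a contradiction.

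The main step is to forbid a cut vertex. Suppose $v$ is a cut vertex. I split $G$ into two edge-disjoint subgraphs $G_{1}$ and $G_{2}$ with $V(G_{1}) \cap V(G_{2}) = \{v\}$ and $G = G_{1} \cup G_{2}$, obtained by grouping the components of $G - v$ (each with its edges to $v$) into two nonempty parts. Each $G_{i}$ is a proper subgraph, so I fix acyclic edge colorings $\phi_{1}, \phi_{2}$ with $\kappa$ colors. Since $\deg_{G_{1}}(v) + \deg_{G_{2}}(v) = \deg_{G}(v) \leq \kappa$, I can permute the colors of $\phi_{2}$ so that the colors it uses at $v$ are disjoint from those used by $\phi_{1}$ at $v$; permuting colors preserves the acyclicity of $\phi_{2}$ within $G_{2}$. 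The merged coloring is then proper everywhere, and for acyclicity I use that every cycle of $G$ lies entirely in $G_{1}$ or entirely in $G_{2}$—a cycle meeting both sides would have to pass through $v$ twice—so no bichromatic cycle survives. Hence $\chiup'_{a}(G) \leq \kappa$, the final contradiction, and $G$ is $2$-connected.

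I expect this cut-vertex step to be the only real obstacle, precisely because of the simultaneous demand that the merged coloring be proper at $v$ and globally acyclic. Properness is secured by the inequality $\deg_{G_{1}}(v) + \deg_{G_{2}}(v) \leq \kappa$, which guarantees a separating permutation of the palette, while acyclicity reduces to the topological fact that a cut vertex blocks any cycle from crossing between $G_{1}$ and $G_{2}$. The one point needing care is checking that recoloring $G_{2}$ by a permutation does not disturb the acyclicity already present inside $G_{2}$, which holds because applying a fixed permutation to all colors sends bichromatic cycles to bichromatic cycles and hence cannot create new ones.
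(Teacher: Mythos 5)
Your proof is correct: the degree argument, the component-wise coloring, and the cut-vertex merge (permuting the palette of $\phi_{2}$ so the colors at $v$ are disjoint, then using that every cycle lies entirely in one $G_{i}$) are all sound. The paper states this lemma without proof, and your argument is exactly the standard one that is being taken for granted there, so there is nothing further to compare.
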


The following two lemmas deal with the local structures of the $2$-vertices
\begin{lemma}[Wang and Zhang \cite{MR3166127}]\label{2+edge}%
Let $G$ be a $\kappa$-deletion-minimal graph. If $v$ is adjacent to a $2$-vertex $v_{0}$ and $N_{G}(v_{0}) = \{w, v\}$, then $v$ is adjacent to at least $\kappa - \deg(w) + 1$ vertices of degree at least $\kappa - \deg(v) + 2$. Moreover,
\begin{enumerate}[label=(\Alph*)]%
\item\label{A} if $\kappa \geq \deg(v) + 1$ and $wv \in E(G)$, then $v$ is adjacent to at least $\kappa - \deg(w) + 2$ vertices of degree at least $\kappa - \deg(v) + 2$, and $\deg(v) \geq \kappa - \deg(w) + 3$;
\item\label{B} if $\kappa \geq \Delta(G) + 2$ and $v$ is adjacent to precisely $\kappa - \Delta(G) + 1$ vertices of degree at least $\kappa - \Delta(G) + 2$, then $v$ is adjacent to at most $\deg(v) + \Delta(G) - \kappa - 3$ vertices of degree two and $\deg(v) \geq \kappa - \Delta(G) + 4$. \qed
\end{enumerate}
\end{lemma}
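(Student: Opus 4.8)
The plan is to fix the $2$-vertex $v_{0}$ with $N_{G}(v_{0}) = \{w, v\}$, write $d = \deg(v)$ and $d_{w} = \deg(w)$, and work with an acyclic edge coloring $\phi$ of $G - vv_{0}$ on $\kappa$ colors, which exists by $\kappa$-deletion-minimality. Set $\gamma = \phi(wv_{0})$, the unique color seen by $v_{0}$. First I would split on whether $\gamma \in \mathcal{U}(v)$. If $\gamma \notin \mathcal{U}(v)$, then $\mathcal{U}(v_{0}) \cap \mathcal{U}(v) = \emptyset$ and Fact~\ref{Fact2} forces $\deg(v) = \kappa$; since $\delta(G) \geq 2$ by Lemma~\ref{delta2}, every one of the $\kappa$ neighbours of $v$ has degree at least $2 = \kappa - d + 2$, and $\kappa \geq \kappa - d_{w} + 1$, so the conclusion is immediate. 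Thus the substantive case is $\gamma \in \mathcal{U}(v)$, where $|A(vv_{0})| = \kappa - (d-1) = \kappa - d + 1$.

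The heart of the argument is to show that every color that must be ``blamed'' for the non-extendability of $\phi$ lives in $\mathcal{U}(w)$. Because $v_{0}$ has degree two, any bichromatic cycle created by assigning a color $\alpha$ to $vv_{0}$ must pass through both edges at $v_{0}$, hence be an $(\alpha,\gamma)$-cycle meeting $w$ through its $\gamma$-edge $wv_{0}$ and a second, necessarily $\alpha$-colored, edge at $w$. By Fact~\ref{Fact2} no available $\alpha$ is valid, so each $\alpha \in A(vv_{0})$ satisfies $\alpha \in \mathcal{U}(w)$; together with $\gamma$ this gives $A(vv_{0}) \cup \{\gamma\} \subseteq \mathcal{U}(w)$. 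The same routing shows the cycle leaves $v$ along its $\gamma$-edge $vv_{1}$ and then an $\alpha$-edge at $v_{1}$, so $A(vv_{0}) \subseteq \mathcal{U}(v_{1})$ and $\deg(v_{1}) \geq \kappa - d + 2$. Next I would push the colors of the low-degree neighbours of $v$ into $\mathcal{U}(w)$ as well. Let $L$ be the set of neighbours $u \neq v_{0}$ with $\deg(u) \leq \kappa - d + 1$ and write $\beta_{u} = \phi(vu)$; note $v_{1} \notin L$. If $u = w$ then $\beta_{u} \in \mathcal{U}(w)$ trivially; otherwise I would recolor $vu$ with a valid color $\delta \neq \gamma$, which frees $\beta_{u}$ at $v$ without disturbing $\mathcal{U}(w)$, and then apply Fact~\ref{Fact2} to the new coloring: since $\beta_{u}$ is now available for $vv_{0}$ and invalid, the same degree-two argument yields $\beta_{u} \in \mathcal{U}(w)$. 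The colors $A(vv_{0})$, $\gamma$, and $\{\beta_{u} : u \in L\}$ are pairwise distinct (the first block avoids $\mathcal{U}(v)$, while the rest are distinct edge-colors at $v$ with $\gamma \notin \{\beta_{u}\}$ because $v_{1} \notin L$), so $(\kappa - d + 1) + 1 + |L| \leq |\mathcal{U}(w)| \leq d_{w}$. Hence $|L| \leq d + d_{w} - \kappa - 2$, and the number of neighbours of $v$ of degree at least $\kappa - d + 2$ is at least $(d-1) - |L| \geq \kappa - d_{w} + 1$, as required.

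The main obstacle is justifying the recoloring step: for each $u \in L \setminus \{w\}$ I must produce a color for $vu$ that is both available and valid, i.e.\ creates no bichromatic cycle. Counting forbidden colors gives at least $\kappa - d - \deg(u) + 3 \geq 2$ available colors once $\deg(u) \leq \kappa - d + 1$, but ruling out every color that closes some $(\delta,\eta)$-cycle requires a careful bookkeeping of the maximal bichromatic paths between $u$ and $v$, using Fact~\ref{Fact1} to bound, for each relevant $\eta \in \mathcal{U}(u) \cap \mathcal{U}(v)$, the colors $\delta$ that yield a critical path. I expect this bichromatic-path analysis, together with checking that the freed color $\beta_{u}$ genuinely becomes available at $v_{0}$, to be the delicate part; the low degree of $u$ is exactly what makes the available colors outnumber the obstructed ones.

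For part~\ref{A}, the extra hypotheses $\kappa \geq \deg(v)+1$ and $wv \in E(G)$ supply one additional color in $\mathcal{U}(w) \cap \mathcal{U}(v)$, namely $\phi(vw)$, which is distinct from $\gamma$ and from the $\beta_{u}$ when $w$ has large degree; inserting it into the count above sharpens the bound by one, giving at least $\kappa - d_{w} + 2$ high-degree neighbours and, since these must sit among the $d-1$ neighbours other than $v_{0}$, the degree bound $\deg(v) \geq \kappa - d_{w} + 3$. For part~\ref{B}, I would run the same scheme with $\Delta(G)$ in place of $d$: when $v$ has exactly $\kappa - \Delta(G) + 1$ neighbours of degree at least $\kappa - \Delta(G) + 2$, the remaining neighbours are constrained, and bounding the colors that $2$-vertex neighbours can contribute to $\mathcal{U}(w)$ limits their number to $d + \Delta(G) - \kappa - 3$ and forces $\deg(v) \geq \kappa - \Delta(G) + 4$.
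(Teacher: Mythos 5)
The counting framework you set up is sound, and your intermediate claims are all true: $A(vv_{0})\cup\{\gamma\}\subseteq\mathcal{U}(w)$, the $\gamma$-neighbour $v_{1}$ has degree at least $\kappa-\deg(v)+2$, and every neighbour $u\neq v_{0}$ of $v$ with $\deg(u)\leq\kappa-\deg(v)+1$ must have $\phi(vu)\in\mathcal{U}(w)$; these do add up to the stated bound exactly as you compute. But the mechanism you propose for the last claim has a genuine gap, which you yourself flag: to free $\beta_{u}$ at $v$ you recolor $vu$ with a \emph{valid} colour $\delta$, and nothing guarantees such a $\delta$ exists. Your count gives at least one available colour other than $\beta_{u}$, but for each $\eta\in\mathcal{U}(u)\cap\mathcal{U}(v)$ the set of candidates $\delta$ that close an $(\eta,\delta)$-bichromatic cycle through $vu$ is controlled only by the degree of the second vertex on the $(\eta,\delta)$-path leaving $v$, which can be as large as $\Delta(G)-1$; Fact~\ref{Fact1} bounds the number of maximal paths for a fixed pair $(\eta,\delta)$, not the number of bad $\delta$ for a fixed $\eta$. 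So the obstructed colours can outnumber the available ones and the recoloring cannot be forced through by this count. (Note also that $G$ being $\kappa$-deletion-minimal gives you no leverage on the edge $vu$, only on the deleted edge $vv_{0}$.)

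The repair is to recolor the \emph{other} edge at $v_{0}$ instead. In $G-vv_{0}$ the vertex $v_{0}$ has degree one, so $wv_{0}$ is a pendant edge: it may be recoloured with any colour $c\notin\mathcal{U}(w)$ with no validity check whatsoever, since a pendant edge lies on no cycle. After doing so, Fact~\ref{Fact2} forces every $\alpha\notin\mathcal{U}(v)\cup\{c\}$ to close a $(c,\alpha)$-bichromatic cycle through the degree-two vertex $v_{0}$; this requires $c\in\mathcal{U}(v)$ and $\alpha\in\mathcal{U}(v_{c})$ for the neighbour $v_{c}$ with $\phi(vv_{c})=c$, whence $\deg(v_{c})\geq\kappa-\deg(v)+2$ for each of the $\kappa-\deg(w)$ colours $c\notin\mathcal{U}(w)$. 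Together with $v_{1}$ this yields the $\kappa-\deg(w)+1$ high-degree neighbours directly, and it is exactly the contrapositive of your claim that low-degree neighbours of $v$ carry only colours from $\mathcal{U}(w)$. Your sketches of \ref{A} and \ref{B} are too thin to assess as written, but \ref{A} falls out of the corrected argument by observing that $w$ itself is an additional high-degree neighbour of $v$ (since $A(vv_{0})\cup\{\gamma\}\subseteq\mathcal{U}(w)$) distinct from $v_{1}$ and from every $v_{c}$; \ref{B} needs a genuinely separate argument that you have not supplied.
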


\begin{lemma}[Wang and Zhang \cite{MR3166127}]\label{2++edge}%
Let $G$ be a $\kappa$-deletion-minimal graph with $\kappa \geq \Delta(G) + 2$. If $v_{0}$ is a $2$-vertex, then $v_{0}$ is adjacent to two $(\kappa - \Delta(G) + 4)^{+}$-vertices.
\end{lemma}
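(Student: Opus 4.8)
The plan is to deduce the statement from Lemma~\ref{2+edge} by a degree-counting argument, using the $2$-vertex $v_{0}$ itself as a ``free'' low-degree neighbor that cannot be one of the high-degree vertices produced by that lemma. Write $N_{G}(v_{0}) = \{v, w\}$ and set $b = \kappa - \Delta(G)$, so $b \geq 2$; the goal is $\deg(v) \geq b + 4$ and $\deg(w) \geq b + 4$. First I would record the base estimate: applying Lemma~\ref{2+edge} to $v$ shows $v$ is adjacent to at least $\kappa - \deg(w) + 1$ vertices of degree at least $\kappa - \deg(v) + 2 \geq b + 2 > 2$; since $v_{0}$ has degree $2$ it is none of these, so $\deg(v) \geq (\kappa - \deg(w) + 1) + 1$, that is, $\deg(v) + \deg(w) \geq \kappa + 2$. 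In particular each of $\deg(v)$ and $\deg(w)$ is at least $\kappa - \Delta(G) + 2 = b + 2$, and $2\Delta(G) \geq \kappa + 2$. Note that when $2\Delta(G) < \kappa + 4$ we have $b + 4 > \Delta(G)$, so there the lemma simply asserts that $G$ has no $2$-vertex; the contradiction derived below covers this case as well.

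Next I would bootstrap from $b + 2$ to $b + 4$ in two steps, each time assuming the bound is tight and invoking part~\ref{B} of Lemma~\ref{2+edge}. Suppose $\deg(w) = b + 2$. Then $\deg(v) = \Delta(G)$ by the degree-sum bound, and applying Lemma~\ref{2+edge} to $w$ gives at least $\kappa - \deg(v) + 1 = b + 1$ neighbors of $w$ of degree at least $\kappa - \deg(w) + 2 = \Delta(G) \geq b + 2$; as $w$ has only $\deg(w) - 1 = b + 1$ neighbors other than $v_{0}$, the number of $(b+2)^{+}$-neighbors of $w$ is exactly $b + 1$. Part~\ref{B} then forces $\deg(w) \geq b + 4$, a contradiction, so $\deg(w) \geq b + 3$ (and symmetrically $\deg(v) \geq b + 3$). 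I would then repeat with $\deg(w) = b + 3$: now $\deg(v) \in \{\Delta(G) - 1, \Delta(G)\}$, and Lemma~\ref{2+edge} applied to $w$ yields at least $\kappa - \deg(v) + 1$ neighbors of degree at least $\kappa - \deg(w) + 2 = \Delta(G) - 1$, while $w$ has $\deg(w) - 1 = b + 2$ neighbors besides $v_{0}$. If this count of $(b+2)^{+}$-neighbors is exactly $b + 1$, part~\ref{B} again yields $\deg(w) \geq b + 4$, contradicting $\deg(w) = b + 3$; when in addition $vw \in E(G)$ I would use part~\ref{A} (legitimate since $\kappa \geq \deg(v) + 1$) to force $\deg(v) = \Delta(G)$ and tighten the count.

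The main obstacle is the single borderline configuration left over in the last step: the count of $(b+2)^{+}$-neighbors of $w$ equals $b + 2$ rather than $b + 1$, i.e.\ every neighbor of $w$ other than $v_{0}$ is a $(b+2)^{+}$-vertex (indeed a $(\Delta(G)-1)^{+}$-vertex), so the neighborhood of $w$ is fully saturated by high-degree vertices and part~\ref{B} does not apply. Pure degree counting via Lemma~\ref{2+edge} cannot exclude this, so to finish I would return to the minimality hypothesis directly through Fact~\ref{Fact2}: delete the edge $v_{0}v$, take an acyclic edge coloring $\phi$ of $G - v_{0}v$, and observe that, since $\deg(v) \leq \Delta(G)$, at least $\kappa - \deg(v) \geq b \geq 2$ colors are available at $v_{0}v$, none of which is valid. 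Each available color $\alpha$ must therefore close an $(\alpha, \phi(v_{0}w))$-bichromatic critical path between $v$ and $w$; the resulting family of critical paths, combined with the auxiliary conclusion of part~\ref{B} bounding the number of $2$-neighbors of a saturated vertex, is what should contradict the full saturation of $w$ and rule out $\deg(w) = b + 3$. Once $\deg(w) \geq b + 4$ is established, the symmetry between $v$ and $w$ gives $\deg(v) \geq b + 4$ as well, completing the proof. I expect this critical-path bookkeeping in the saturated borderline case to be the most delicate part of the argument.
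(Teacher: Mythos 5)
First, a point of comparison: the paper does not prove this lemma at all --- it is imported verbatim from Wang and Zhang \cite{MR3166127} --- so there is no in-paper argument to measure yours against, and your proposal has to stand on its own. Judged that way, the first two thirds are correct. Writing $b=\kappa-\Delta(G)$ as you do, the degree-sum bound $\deg(v)+\deg(w)\geq\kappa+2$ obtained by applying \autoref{2+edge} to $v$ and discounting $v_{0}$ is right, and the first bootstrapping step is a complete argument: if $\deg(w)=b+2$ then $w$ has only $b+1$ neighbors besides $v_{0}$, so it has \emph{precisely} $b+1$ neighbors of degree at least $b+2$, part~\ref{B} applies, and $\deg(w)\geq b+4$ gives the contradiction.

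The gap is in the final step, and you have located it honestly but not closed it. When $\deg(w)=b+3$, the vertex $w$ has $b+2$ neighbors other than $v_{0}$, and \autoref{2+edge} guarantees only that at least $b+1$ of them (at least $b+2$ of them when $\deg(v)=\Delta(G)-1$) have degree at least $\kappa-\deg(w)+2=\Delta(G)-1\geq b+2$. In the saturated case where all $b+2$ have high degree, the hypothesis of part~\ref{B} (``precisely $\kappa-\Delta(G)+1$ such neighbors'') fails, so neither its degree conclusion nor its auxiliary bound on the number of adjacent $2$-vertices is available --- yet your proposed finish invokes exactly that auxiliary conclusion. The coloring argument you sketch also does not add information: the observation that every available color $\alpha$ for $v_{0}v$ must lie on a $(\phi(v_{0}w),\alpha,v_{0},v)$-critical path, hence lie in $\mathcal{U}(w)$, only re-derives $\deg(v)+\deg(w)\geq\kappa+2$ (this is \autoref{Fact2} with $s=1$), which you already have. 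To push past $b+3$ one must actively modify the partial coloring --- recolor $v_{0}w$ with colors missing at $w$, or uncolor and recolor edges at $w$, and track which critical paths are forced --- and none of that bookkeeping is specified. As written, the proposal is a valid partial reduction with the genuinely hard case left open, and does not constitute a proof of the lemma.
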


Wang and Zhang also gave the following local structure of the $3$-vertices. 
\begin{lemma}[Wang and Zhang \cite{MR3166127}]\label{Good-3-vertex}%
Let $G$ be a $\kappa$-deletion-minimal graph with $\kappa \geq \Delta(G) + 2$ and $v$ be a $3$-vertex with $N_{G}(v) = \{w, v_{1}, v_{2}\}$. If $\deg(w) = \kappa - \Delta(G) + 2$, then $G$ has the following properties:

\begin{enumerate}[label= (\alph*)]%
\item\label{3a} there is exactly one common color at $w$ and $v$ for any acyclic edge coloring of $G - wv$. By symmetry, we may assume that the color on $vv_{1}$ is the common color;
\item\label{3b} $\deg(v_{1}) = \Delta(G) \geq \deg(v_{2}) \geq \kappa -\Delta(G) + 3$;
\item\label{3c} the edge $wv$ is not contained in any triangle in $G$ and $w$ is adjacent to exactly one $3^{-}$-vertex;
\item\label{3d} the vertex $v_{1}$ is adjacent to at least $\kappa - \deg(v_{2}) + 1$ vertices of degree at least $\kappa - \Delta(G) + 2$;
\item\label{3e} the vertex $v_{2}$ is adjacent to at least $\kappa - \Delta(G)$ vertices of degree at least $\kappa - \deg(v_{2}) + 2$;
\item\label{3f} the vertex $v_{2}$ is adjacent to at least $\kappa - \Delta(G) + 1$ vertices of degree at least four. \qed
\end{enumerate}
\end{lemma}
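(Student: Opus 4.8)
The plan is to read every property off the single principle that, by $\kappa$-deletion-minimality, no edge we delete can be validly recolored (Fact~\ref{Fact2}), combined with the uniqueness of maximal bichromatic paths (Fact~\ref{Fact1}). Fix once and for all an acyclic edge coloring $\phi$ of $G - wv$ with $\kappa$ colors, which exists by minimality, and write $a = \phi(vv_{1})$, $b = \phi(vv_{2})$, so that $\mathcal{U}(v) = \{a, b\}$ and $|\mathcal{U}(w)| = \deg(w) - 1 = \kappa - \Delta + 1$. Observe first that $\deg(w) = \kappa - \Delta + 2 \le \Delta$ together with $\kappa \ge \Delta + 2$ forces $\Delta \ge 4$, so the hypothesis is non-vacuous only in that range, which I will assume.

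For part~\ref{3a}, put $s = |\mathcal{U}(w) \cap \mathcal{U}(v)|$, so that the number of colors available for $wv$ is $|A(wv)| = \kappa - (\kappa - \Delta + 3 - s) = \Delta - 3 + s$. If $s = 0$, the disjoint-set clause of Fact~\ref{Fact2} gives $\deg(w) + \deg(v) = \kappa + 2$, i.e. $\kappa - \Delta + 5 = \kappa + 2$ and hence $\Delta = 3$, contradicting $\Delta \ge 4$; thus $s \ge 1$. For the reverse inequality I would use that every available color is invalid, and that a color $\gamma$ can be invalid only through a $(\delta, \gamma, w, v)$-critical path with $\delta$ a common color; such a path ends at $v$ along the edge colored $\delta$ and is unique for each color pair by Fact~\ref{Fact1}, which confines the invalid colors to $\mathcal{U}(v_{1}) \cup \mathcal{U}(v_{2})$. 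If a second common color were present then $A(wv) = A(w)$, and recoloring the unique edge at the low-degree vertex $w$ that carries one of the two common colors can be shown to release a valid color for $wv$, extending $\phi$ to all of $G$ and contradicting $\chiup_{a}'(G) > \kappa$. Hence $s = 1$, and we label the common color so that it lies on $vv_{1}$.

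With $s = 1$ the $\Delta - 2$ colors available for $wv$ are each blocked by an $(a, \gamma, w, v)$-critical path whose edge entering $v_{1}$ is colored $\gamma$; therefore $\mathcal{U}(v_{1}) \supseteq \{a\} \cup A(wv)$ and $\deg(v_{1}) \ge \Delta - 1$, which a short recoloring argument sharpens to $\deg(v_{1}) = \Delta$, while a symmetric argument at $v_{2}$ gives $\deg(v_{2}) \ge \kappa - \Delta + 3$; this is part~\ref{3b}. Part~\ref{3c} follows in the same spirit: a triangle on $wv$ would collapse one blocking path to a single edge and violate the color count, and applying Fact~\ref{Fact2} to a second short edge at $w$, whose endpoint has only $\Delta - 1$ available colors, forces $v$ to be the only $3^{-}$-neighbor of $w$. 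Finally, for parts~\ref{3d}--\ref{3f} I would delete, in turn, suitable edges incident with $v_{1}$ and with $v_{2}$ and again invoke Fact~\ref{Fact2}: too few high-degree neighbors of $v_{1}$ (respectively $v_{2}$) would leave a color available and valid for the deleted edge, so $v_{1}$ and $v_{2}$ must carry the stated numbers of neighbors of the prescribed degrees.

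I expect the genuine obstacle to lie in part~\ref{3a}, and specifically in the upper bound $s \le 1$: turning ``no available color for $wv$ is valid'' into the exact family of blocking critical paths, and then showing these paths cannot coexist once two common colors are present, demands simultaneous control of which colors appear at $v_{1}$ and at $v_{2}$ and of how a single recoloring at $w$ propagates along a maximal bichromatic path. The uniqueness in Fact~\ref{Fact1} is what keeps this bookkeeping finite, and each recoloring step in parts~\ref{3b}--\ref{3f} must be checked not to introduce a fresh bichromatic cycle elsewhere; verifying this repeatedly, rather than any single inequality, is the heart of the argument.
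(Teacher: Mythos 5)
First, a point of reference: the paper does not prove this lemma at all --- it is imported verbatim from Wang and Zhang \cite{MR3166127} and stated without proof, so there is no in-paper argument to measure yours against. Judged on its own terms, your proposal is a reasonable outline of how such structural lemmas are proved (delete an edge, use Fact~\ref{Fact2} to force common colors, use Fact~\ref{Fact1} to control the blocking critical paths, then recolor to a contradiction), and the easy half of part~\ref{3a} is correct and complete: $s=0$ would give $\deg(w)+\deg(v)=\kappa+2$, hence $\Delta=3$, against $\Delta\geq 4$, so $s\geq 1$. But every step that carries real content is asserted rather than proved, and you concede as much in your closing paragraph.

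Concretely: (i) the bound $s\leq 1$ is the crux of part~\ref{3a}, and ``recoloring the unique edge at $w$ carrying one of the two common colors can be shown to release a valid color'' is not an argument --- with $s=2$, Fact~\ref{Fact2} only yields $\deg(v_{1})+\deg(v_{2})\geq \Delta+1$, which is nowhere near a contradiction, and after recoloring an edge $ww_{i}$ you must verify both that the new coloring of $G-wv$ is still acyclic and that some available color for $wv$ is no longer blocked by any critical path; neither is checked. (ii) In part~\ref{3b} the critical-path count gives only $\deg(v_{1})\geq |A(wv)|+1=\Delta-1$; the claimed equality $\deg(v_{1})=\Delta$, and likewise the lower bound $\deg(v_{2})\geq \kappa-\Delta+3$, each need a further recoloring argument you do not supply. (iii) Part~\ref{3c} (exactly one $3^{-}$-neighbor of $w$) and parts~\ref{3d}--\ref{3f} are precisely the quantitative claims that make the lemma useful ($\kappa-\deg(v_{2})+1$, $\kappa-\Delta$, and $\kappa-\Delta+1$ neighbors of specified degrees), and for these you only indicate which edges you would delete, not how the counts come out. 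As it stands the proposal is a plan rather than a proof; completing it would amount to reproducing the argument of \cite{MR3166127}.
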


\begin{lemma}[Hou \etal \cite{MR2849391}]\label{3+vertex}%
If $G$ is a $\kappa$-deletion-minimal graph with $\kappa \geq \Delta(G) + 2$, then every $3$-vertex is adjacent to three $(\kappa - \Delta(G) + 2)^{+}$-vertices. 
\end{lemma}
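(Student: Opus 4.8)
The plan is to argue by contradiction. Suppose some $3$-vertex $v$ with $N_{G}(v)=\{v_{1},v_{2},v_{3}\}$ has a neighbour, say $v_{3}$, with $\deg(v_{3})\le\kappa-\Delta(G)+1$. Since $G$ is $\kappa$-deletion-minimal, the subgraph $G-vv_{3}$ admits an acyclic edge coloring $\phi$ with $\kappa$ colors; write $a=\phi(vv_{1})$ and $b=\phi(vv_{2})$, so $a\neq b$ and $\mathcal{U}(v)=\{a,b\}$. As $|\mathcal{U}(v_{3})|=\deg(v_{3})-1\le\kappa-\Delta(G)$, the edge $vv_{3}$ has at least $\kappa-2-(\kappa-\Delta(G))=\Delta(G)-2\ge1$ available colors (using $\Delta(G)\ge3$). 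By Fact \ref{Fact2} none of them is valid, so colouring $vv_{3}$ with any available $\alpha$ closes a bichromatic cycle. Such a cycle meets $v$ along $vv_{3}$ (colour $\alpha$) and one further edge coloured $a$ or $b$; hence it is carried by a maximal bichromatic path joining $v$ and $v_{3}$ whose two end-edges are both coloured $\beta$ for some $\beta\in\{a,b\}$. Such a path can reach $v_{3}$ only on a $\beta$-coloured edge, so it exists only if $\beta\in\mathcal{U}(v_{3})$. Consequently $\{a,b\}\cap\mathcal{U}(v_{3})\neq\emptyset$; put $s=|\{a,b\}\cap\mathcal{U}(v_{3})|\in\{1,2\}$.

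The next step is a counting argument that pins the configuration down to an extremal one. The path witnessing the invalidity of an available colour $\alpha$ leaves $v$ along $vv_{1}$ (when $\beta=a$) or along $vv_{2}$ (when $\beta=b$), and its second edge is coloured $\alpha$; hence $\alpha\in\mathcal{U}(v_{1})$ in the first case and $\alpha\in\mathcal{U}(v_{2})$ in the second. Therefore $A(vv_{3})\subseteq\mathcal{U}(v_{1})\cup\mathcal{U}(v_{2})$, where only the term(s) corresponding to $\{a,b\}\cap\mathcal{U}(v_{3})$ actually occur. If $s=1$, say $a\in\mathcal{U}(v_{3})$, then $A(vv_{3})\subseteq\mathcal{U}(v_{1})\setminus\{a\}$, and comparing $|A(vv_{3})|=\kappa-\deg(v_{3})$ with $|\mathcal{U}(v_{1})|-1\le\Delta(G)-1$ forces $\deg(v_{3})=\kappa-\Delta(G)+1$, $\deg(v_{1})=\Delta(G)$, and the equality $A(vv_{3})=\mathcal{U}(v_{1})\setminus\{a\}$. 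If $s=2$, then $\{a,b\}\subseteq\mathcal{U}(v_{3})$, so $A(vv_{3})$ equals the set of colours missing at $v_{3}$ (of size $\kappa-\deg(v_{3})+1\ge\Delta(G)$) and is contained in $\mathcal{U}(v_{1})\cup\mathcal{U}(v_{2})$; invoking Fact \ref{Fact1} to keep the two families of critical paths (through $v_{1}$ and through $v_{2}$) unique, I would again force all inequalities to equalities, so that $v_{1}$ and $v_{2}$ become $\Delta(G)$-vertices with prescribed colour sets.

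In the resulting extremal configuration I would derive the final contradiction by a local recolouring: recolour $vv_{1}$ (or $vv_{2}$) with a colour lying outside $\mathcal{U}(v_{1})\cup\{b\}$ so as to release $a$ at $v$, and then exhibit a colour that becomes valid for $vv_{3}$, appealing to Fact \ref{Fact1} to certify that the swap creates no new bichromatic cycle. The main obstacle is precisely this extremal analysis: the counting is tight, so one cannot finish by a bare inequality and must track how the critical paths move under recolouring, in particular ruling out that the recolouring merely transfers the obstruction from $a$ to the newly introduced colour. I expect the $s=2$ case to be the most delicate point, since there the containment $A(vv_{3})\subseteq\mathcal{U}(v_{1})\cup\mathcal{U}(v_{2})$ is a union rather than a single colour set and the two critical-path systems must be controlled simultaneously; this is where almost all of the work lies.
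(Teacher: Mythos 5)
The paper offers no proof of this lemma at all---it is quoted from Hou, Roussel and Wu \cite{MR2849391}---so your proposal has to be judged against what a complete argument requires, and there it falls short in two concrete places. Your setup is correct and standard: $s\geq 1$, the containments via critical paths, and the tight count in the $s=1$ case (forcing $\deg(v_{3})=\kappa-\Delta(G)+1$, $\deg(v_{1})=\Delta(G)$ and $A(vv_{3})=\mathcal{U}(v_{1})\setminus\{a\}$) are all right. But your claim that in the $s=2$ case the counting ``again forces all inequalities to equalities'' is false: there $|A(vv_{3})|=\kappa-\deg(v_{3})+1\geq\Delta(G)$, while the containing set is the \emph{union} $\Upsilon(v,v_{1})\cup\Upsilon(v,v_{2})$, of size up to $2(\Delta(G)-1)\geq\Delta(G)$, so nothing is pinned down. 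The workable move there is different: since $a\in\mathcal{U}(v_{1})\setminus A(vv_{3})$, one can always find $\alpha\in A(vv_{3})\setminus\mathcal{U}(v_{1})$ and, after checking the $(\alpha,b)$-cycle condition, recolor $vv_{1}$ with $\alpha$ to \emph{decrease} the number of common colors between $v$ and $v_{3}$; this is why such proofs fix $\phi$ at the outset to minimize $|\mathcal{U}(v)\cap\mathcal{U}(v_{3})|$ (exactly as this paper does in its proof of (C6)), so that $s=2$ is dispatched by the choice of $\phi$ rather than by counting.

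More seriously, your endgame for the extremal $s=1$ configuration provably cannot work in the form proposed. In that configuration the palette decomposes as $\{b\}\uplus\mathcal{U}(v_{3})\uplus A(vv_{3})$ and $\mathcal{U}(v_{1})=\{a\}\cup A(vv_{3})$, so the set of colors outside $\mathcal{U}(v_{1})\cup\{b\}$ is \emph{exactly} $\mathcal{U}(v_{3})\setminus\{a\}$. Hence any admissible replacement color $\theta$ for $vv_{1}$ already lies in $\mathcal{U}(v_{3})$: the new coloring again has precisely one common color between $v$ and $v_{3}$, the available set $A(vv_{3})$ is unchanged, and the same counting forces the identical extremal structure with $\theta$ in place of $a$. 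The obstruction is transferred verbatim---the very failure mode you flag---and \autoref{Fact1} cannot rescue this, since it compares paths within a single coloring, not across a recoloring. A complete proof needs a genuinely different move; for instance, acting on $vv_{2}$ instead: if $a\notin\mathcal{U}(v_{2})$ and some $\gamma\notin\mathcal{U}(v_{2})\cup\mathcal{U}(v_{3})\cup\{a\}$ exists, recoloring $vv_{2}$ with $\gamma$ puts $b$ into the new available set $A'(vv_{3})$, while the forced containment $A'(vv_{3})\subseteq\mathcal{U}(v_{1})\setminus\{a\}=A(vv_{3})$ excludes $b$, a contradiction; the remaining subcases (no such $\gamma$, or $a\in\mathcal{U}(v_{2})$) then require the further critical-path analysis that constitutes the heart of Hou et al.'s argument. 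That substance is missing from your plan, so what you have is a correct reduction to the hard case, not a proof.
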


\begin{lemma}\label{K_D_4}%
If $G$ is a $\kappa$-deletion-minimal graph with $\kappa \geq \Delta(G) + 2$, then every vertex is adjacent to at least two $4^{+}$-vertices. 
\end{lemma}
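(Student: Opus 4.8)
The plan is to argue by contradiction and split on the degree of the vertex $v$ under consideration, reducing everything to a single hard configuration. By Lemma~\ref{delta2} we have $\delta(G) \geq 2$, so $\deg(v) \geq 2$. If $v$ is a $2$-vertex, then Lemma~\ref{2++edge} shows that both of its neighbours are $(\kappa - \Delta(G) + 4)^{+}$-vertices, and since $\kappa \geq \Delta(G) + 2$ these are $6^{+}$-vertices; in particular $v$ has two $4^{+}$-neighbours. If $v$ is a $3$-vertex, then Lemma~\ref{3+vertex} gives three $(\kappa - \Delta(G) + 2)^{+}$-neighbours, each of which is a $4^{+}$-vertex. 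So I may assume $\deg(v) \geq 4$ and that $v$ has at most one $4^{+}$-neighbour; the aim is to reach a contradiction.

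First I dispose of the case where $v$ has a $2$-vertex neighbour $v_{0}$ with $N_{G}(v_{0}) = \{w, v\}$. Here Lemma~\ref{2+edge} provides at least $\kappa - \deg(w) + 1 \geq \kappa - \Delta(G) + 1 \geq 3$ neighbours of $v$ of degree at least $\kappa - \deg(v) + 2 \geq \kappa - \Delta(G) + 2 \geq 4$, i.e.\ at least three $4^{+}$-neighbours, contradicting the assumption. Hence every $3^{-}$-neighbour of $v$ is in fact a $3$-vertex, and since at most one neighbour is a $4^{+}$-vertex, $v$ has at least $\deg(v) - 1 \geq 3$ neighbours that are $3$-vertices. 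This is the configuration I must rule out.

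To do so, pick a $3$-vertex neighbour $u$ of $v$ with $N_{G}(u) = \{v, x, y\}$; by Lemma~\ref{3+vertex} both $x$ and $y$ are $4^{+}$-vertices. By minimality $G - uv$ admits an acyclic edge coloring $\phi$. Since $\deg(u) + \deg(v) = 3 + \deg(v) \leq 3 + \Delta(G) < \kappa + 2$, the furthermore-clause of Fact~\ref{Fact2} forbids $\mathcal{U}(u) \cap \mathcal{U}(v) = \emptyset$; thus $\mathcal{U}(u) \cap \mathcal{U}(v) \neq \emptyset$, and every common colour is one of $\phi(ux), \phi(uy)$. The heart of the argument is to massage $\phi$ into an acyclic edge coloring of $G - uv$ for which $\mathcal{U}(u) \cap \mathcal{U}(v) = \emptyset$: once this is achieved, the same clause of Fact~\ref{Fact2} forces $3 + \deg(v) = \kappa + 2$, i.e.\ $\deg(v) = \kappa - 1 \geq \Delta(G) + 1$, which is impossible. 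To empty the intersection I would recolour the two edges $ux, uy$ incident with the $3$-vertex $u$, choosing for each a colour outside $\mathcal{U}(v)$ that keeps the coloring proper and acyclic; the bichromatic cycles obstructing such a recolouring are controlled through Fact~\ref{Fact1}, so that only boundedly many colours are barred by acyclicity. Whenever a single recolouring is blocked, the abundance of $3$-vertex neighbours of $v$ together with the slack $\kappa - \Delta(G) \geq 2$ supply extra free colours and allow me to first re-route the colour on a second edge $vu'$ (with $u'$ another $3$-vertex neighbour) before recolouring at $u$.

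I expect the main obstacle to be precisely this recolouring step. The vertices $x$ and $y$, together with $v$, may all have degree close to $\Delta(G)$, so a crude union bound on the colours forbidden at $ux$ (namely $\mathcal{U}(x) \cup \mathcal{U}(v) \cup \{\phi(uy)\}$ together with the colours barred by potential bichromatic cycles) can exceed $\kappa$. Overcoming this requires a careful analysis of the $(\alpha, \beta)$-critical and alternating paths among $u$, $x$, $y$ and $v$, exploiting Fact~\ref{Fact1} to show that the genuinely forbidden colours are far fewer than the naive estimate, and using the several $3$-vertex neighbours of $v$ to relocate colours out of $\mathcal{U}(v)$. Once $\mathcal{U}(u) \cap \mathcal{U}(v)$ is emptied, the contradiction via Fact~\ref{Fact2} is immediate, completing the proof that every vertex has at least two $4^{+}$-neighbours.
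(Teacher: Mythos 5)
Your reduction is sound and matches the paper's setup: the $2$-vertex and $3$-vertex cases follow from \autoref{2++edge} and \autoref{3+vertex}, a $2$-vertex neighbour is excluded by \autoref{2+edge}, and one is left with a vertex $v$ of degree at least $4$ having at most one $4^{+}$-neighbour and hence several $3$-vertex neighbours, with $\mathcal{U}(u)\cap\mathcal{U}(v)\neq\emptyset$ for any acyclic coloring of $G-uv$ by Fact~\ref{Fact2}. But everything after that point is a plan, not a proof, and it is precisely the part that constitutes essentially all of the paper's argument. You say you will recolour $ux$ and $uy$ to colours outside $\mathcal{U}(v)$ and that ``only boundedly many colours are barred,'' then immediately concede that the naive forbidden set $\mathcal{U}(x)\cup\mathcal{U}(v)\cup\{\phi(uy)\}$ can exceed $\kappa$ (indeed it can have size close to $2\Delta$, while $\kappa$ may be only $\Delta+2$) and that overcoming this ``requires a careful analysis'' you do not supply. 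That careful analysis is the content of the lemma: the paper spends Cases 1 and 2 with several subcases chasing $(\alpha,\beta)$-critical paths, exchanging colours between edges $ww_i$, and repeatedly invoking Fact~\ref{Fact1} before reaching a contradiction. Announcing that Fact~\ref{Fact1} will control the obstructions is not a substitute for exhibiting how.

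There is also a structural doubt about your intended endgame. You aim to transform $\phi$ into a coloring of $G-uv$ with $\mathcal{U}(u)\cap\mathcal{U}(v)=\emptyset$ and then contradict Fact~\ref{Fact2}. The paper never achieves (and never needs) an empty intersection: it reduces the two-common-colours case to the one-common-colour case and then derives the contradiction \emph{within} that case by other means --- either producing a valid colour for the deleted edge or violating the uniqueness of maximal bichromatic paths (Fact~\ref{Fact1}). Since Fact~\ref{Fact2} itself tells you that an empty intersection is impossible whenever $\deg(u)+\deg(v)\neq\kappa+2$, your target configuration is exactly one that cannot be reached, so the recolouring you propose must fail somewhere, and the proof has to extract its contradiction from \emph{how} it fails. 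As written, the proposal identifies the right reduction but leaves the core of the lemma unproved.
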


\begin{proof}
Let $w$ be a vertex with neighbors $w_{0}, w_{1}, \dots, w_{\tau - 1}$. Suppose to the contrary that $w$ is adjacent to at most one $4^{+}$-vertex. By \autoref{2+edge}, no $2$-vertex is adjacent to $w$. Let $w_{0}$ be a $3$-vertex with neighbors $w, v_{1}, v_{2}$. Since $G$ is $\kappa$-deletion-minimal, the graph $G - ww_{0}$ has an acyclic edge coloring $\phi$ with $\phi(ww_{i}) = i$ for $1 \leq i \leq \tau -1$.  Note that $\deg(w) + \deg(w_{0}) = \deg(w) + 3 \neq \kappa + 2$, Fact~2 guarantees $|\mathcal{U}(w) \cap \mathcal{U}(w_{0})| \geq 1$. Without loss of generality, we may assume that $w_{0}v_{1}$ is colored with $1$. 

\paragraph{\textsc{Case 1. $|\mathcal{U}(w) \cap \mathcal{U}(w_{0})| = 1$.}}
Note that $G$ cannot be acyclically edge colored with $\kappa$ colors, thus there exists a $(1, \alpha, w, w_{0})$-critical path for $\alpha \in A(ww_{0})$, and then $A(ww_{0}) \subseteq \Upsilon(w, w_{1}) \cap \Upsilon(w_{0}, v_{1})$. In this case, we consider the following two situations according to the degree of $w_{1}$. 

\paragraph{\textsc{Subcase 1.1. $w_{1}$ is a $3$-vertex.}} 
Recall that there exists a $(1, \alpha, w, w_{0})$-critical path for $\alpha \in A(ww_{0})$, thus $\tau = \kappa - 2 =\Delta$ and $\Upsilon(w, w_{1}) \subseteq \{\Delta, \Delta + 1, \Delta + 2\}$. If there exists another vertex $w_{s}$ with $\Upsilon(w, w_{s}) \subseteq \{\Delta, \Delta + 1, \Delta + 2\}$, then we can exchange the colors on $ww_{1}$ and $ww_{s}$, and additionally color $ww_{0}$ with an element in $A(ww_{0})$. Hence, we have $\Upsilon(w, w_{s}) \cap \{1, 2, \dots, \Delta - 1\} \neq \emptyset$ for $s \geq 2$. Let $w_{2}, w_{3}, \dots, w_{\tau - 2}$ be $3$-vertices. For $i \geq 2$, uncolor $ww_{i}$ and color $ww_{0}$ with $i$, we obtain an acyclic edge coloring $\phi_{i}$ of $G - ww_{i}$. 

There exists a $(\lambda_{\alpha}, \alpha, w, w_{2})$-critical path for each $\alpha$ in $A(ww_{2})$, for otherwise we can color $ww_{0}$ with $2$ and recolor $ww_{2}$ with $\alpha$. It follows that there exists $x, y \in A(ww_{2})$ with $\lambda_{x} = \lambda_{y} = \lambda$, and then $\{x, y\} \subseteq \Upsilon(w, w_{\lambda})$. By Fact~1, we have that $\lambda \neq 1$, and then the vertex $w_{\lambda}$ is a $4^{+}$-vertex and $\lambda = \tau - 1$. 

By similar arguments, we conclude that there exists a $(\tau - 1, \alpha_{1}, w, w_{3})$-critical path and a $(\tau - 1, \alpha_{2}, w, w_{3})$-critical path for some $\alpha_{1}, \alpha_{2}$ in $A(ww_{3})$. Note that $\{\alpha_{1}, \alpha_{2}\} \cup \{x, y\} \subseteq \{\Delta, \Delta + 1, \Delta + 2\}$, thus $\{\alpha_{1}, \alpha_{2}\} \cap \{x, y\} \neq \emptyset$, but this contradicts Fact~1. 

\paragraph{\textsc{Subcase 1.2. $w_{1}$ is a $4^{+}$-vertex.}}
Note that $|A(v_{1}) \cap \{2, 3, \dots, \tau - 1\}| \geq 1$, otherwise $\deg(v_{1}) \geq \kappa -1 \geq \Delta + 1$. By symmetry, we may assume that $2$ is a missing color at $v_{1}$. Uncolor $ww_{2}$ and color $ww_{0}$ with $2$, the resulting coloring is an acyclic edge coloring $\varphi$ of $G - ww_{2}$. By Fact~2, we have that $\Upsilon(w, w_{2}) \cap \{1, 2, \dots, \tau - 1\} \neq \emptyset$.  
\begin{itemize}
\item $\Upsilon(w, w_{2}) = \{\rho, \rho'\}$ and $\rho' \geq \tau$. There exists a $(\rho, \alpha, w, w_{2})$-critical path for $\alpha \in A(ww_{0}) \cap A(ww_{2})$, thus $\rho \neq 1$ and $w_{\rho}$ is a $3$-vertex. Now, we can reduce it to Subcase 1.1 with $\varphi$ playing the role of $\phi$. 

\item $\Upsilon(w, w_{2}) = \{\rho, \rho'\} \subseteq \{1, 2, \dots, \tau -1\}$. Note that none of $\tau, \tau + 1, \dots, \kappa$ is valid for $ww_{2}$ under $\varphi$, thus there exists a $(\rho, \alpha_{1}, w, w_{2})$-critical path and a $(\rho, \alpha_{2}, w, w_{2})$-critical path for some $\alpha_{1}, \alpha_{2}$ from $\{\tau, \tau + 1, \dots, \kappa\}$. It is obvious that $\{\alpha_{1}, \alpha_{2}\} \cap A(ww_{0}) \neq \emptyset$, thus $\rho \neq 1$ and $\Upsilon(w, w_{\rho}) = \{\alpha_{1}, \alpha_{2}\}$. So we may assume that $\rho = 3$. 

If $3 \notin \mathcal{U}(v_{1})$, then we can color $ww_{0}$ with $3$ and recolor $ww_{3}$ with an element in $\{\tau, \tau + 1, \dots, \kappa\} \setminus \{\alpha_{1}, \alpha_{2}\}$. It follows that $3 \in \mathcal{U}(v_{1})$. 

Suppose that $4 \notin \mathcal{U}(v_{1})$ and $\Upsilon(w, w_{4}) = \{p, q\}$.  For each $\alpha \in A(ww_{4})$, there exists a $(p, \alpha, w, w_{4})$-critical path or a $(q, \alpha, w, w_{4})$-critical path, for otherwise we can color $ww_{0}$ with $4$ and recolor $ww_{4}$ with $\alpha$. 
\begin{itemize} 
\item Suppose that $q \geq \tau$. Thus there exists a $(p, \alpha, w, w_{4})$-critical path for each $\alpha \in A(ww_{4}) $, and then $p \neq 3$ and $A(ww_{4}) \subseteq \Upsilon(w, w_{p})$. Note that $A(ww_{0}) \cap A(ww_{4}) \neq \emptyset$, so we have that $p \neq 1$. In fact $w_{p}$ is a $3$-vertex and $\Upsilon(w, w_{p}) = A(ww_{4})$. We can exchange the colors on $ww_{3}$ and $ww_{p}$, color $ww_{0}$ with $4$ and recolor $ww_{4}$ with an element in $A(ww_{4})$. 

\item Suppose that $\{p, q\} \subseteq \{1, 2, \dots, \tau -1\}$. Note that $A(ww_{4}) = \{\tau, \tau + 1, \dots, \kappa\}$ and $|A(ww_{4})| \geq 3$, so we may assume that there exists a $(p, \xi_{1}, w, w_{4})$-critical path and a $(p, \xi_{2}, w, w_{4})$-critical path for some $\xi_{1}, \xi_{2} \in A(ww_{4})$. It concludes that $\{\xi_{1}, \xi_{2}\} \subseteq \Upsilon(w, w_{p})$. Clearly, $\{\xi_{1}, \xi_{2}\} \cap A(ww_{0}) \neq \emptyset$,  and then $p \in \{1, 3\}$ due to Fact~1. So $w_{p}$ is a $3$-vertex with $\Upsilon(w, w_{p}) = \{\xi_{1}, \xi_{2}\}$. We can exchange the colors on $ww_{3}$ and $ww_{p}$, color $ww_{0}$ with $2$ and uncolor $ww_{2}$, and then we obtain a new acyclic edge coloring of $G - ww_{2}$. Under this new coloring, there exists a $(\rho', \alpha, w, w_{2})$-critical path for each $\alpha \in A(ww_{2})$, and then $A(ww_{2}) \subseteq \Upsilon(w, w_{\rho'})$. If $\rho' = 1$, then there exists a $(1, \alpha, w, w_{2})$-critical path and a $(1, \alpha, w, w_{0})$-critical path for each $\alpha \in A(ww_{0})$, which contradicts Fact~1. If $\rho' \neq 1$, then $\deg(w_{\rho'}) \geq 1 + |A(ww_{2})| \geq 4$, a contradiction. 
\end{itemize}

By similar arguments, $\{4, 5, \dots, \tau - 1\} \subseteq \mathcal{U}(v_{1})$, and then $\{1, 3, 4, \dots, \tau - 1\} \cup A(ww_{0}) \subseteq \mathcal{U}(v_{1})$. It follows that $A(v_{1}) = \{2, \phi(w_{0}v_{2})\}$. We recolor $w_{0}v_{1}$ with $2$, and then we reduce it to Subcase 1.1. 
\end{itemize}

\paragraph{\textsc{Case 2. $|\mathcal{U}(w) \cap \mathcal{U}(w_{0})| = 2$.}}
By symmetry, we may assume that $w_{0}v_{2}$ is colored with $2$. There exists a $(1, \alpha, w_{0}, w)$-critical path or a $(2, \alpha, w_{0}, w)$-critical path for $\alpha \in \{\tau, \tau + 1, \dots, \kappa\}$, thus $\{\tau, \tau + 1, \dots, \kappa\} \subseteq \Upsilon(w, w_{1}) \cup \Upsilon(w, w_{2})$.

\paragraph{\textsc{Subcase 2.1. Either $\Upsilon(w_{0}, v_{1}) \nsupseteq \{\tau, \dots, \kappa\}$ or $\Upsilon(w_{0}, v_{2}) \nsupseteq \{\tau, \dots, \kappa\}$.}}
By symmetry, we may assume that $\tau \notin \Upsilon(w_{0}, v_{2})$. Note that $\tau$ is not valid for $ww_{0}$, thus there exists a $(1, \tau, w_{0}, w)$-critical path, and then there exists no $(1, \tau, w_{0}, v_{2})$-critical path. Recoloring $w_{0}v_{2}$ with $\tau$ results in a new acyclic edge coloring $\sigma$ of $G - ww_{0}$ with $|\mathcal{U}_{\sigma}(w) \cap \mathcal{U}_{\sigma}(w_{0})| = 1$ and it takes us back to Case 1.

\paragraph{\textsc{Subcase 2.2. $\Upsilon(w_{0}, v_{1}) \supseteq \{\tau, \dots, \kappa\}$ and $\Upsilon(w_{0}, v_{2}) \supseteq \{\tau, \dots, \kappa\}$.}}

\paragraph{($\ast_{1}$)} Note that at most one of $w_{1}$ and $w_{2}$ is a $4^{+}$-vertex, so we may assume that $w_{2}$ is a $3$-vertex. If $\Upsilon(w, w_{2}) \subseteq \{\tau + 1, \tau + 2, \dots, \kappa\}$, then we can recolor $ww_{2}$ with an element in $\{\tau, \tau + 1, \dots, \kappa\} \setminus \Upsilon(w, w_{2})$, and then reduce it to Case~1. So we may assume that $\Upsilon(w, w_{2}) = \{\rho, \rho'\}$ with $\rho < \tau$. There exists a $(1, \alpha, w, w_{0})$-critical path for $\alpha \in \{\tau, \tau + 1, \dots, \kappa\} \setminus \{\rho'\}$, thus $\{\tau, \tau + 1, \dots, \kappa\} \setminus \{\rho'\} \subseteq \Upsilon(w, w_{1})$. If $w_{1}$ is a $3$-vertex, then $\Upsilon(w, w_{1}) \cup \{\rho'\} = \{\tau, \tau + 1, \dots, \kappa\} = \{\Delta, \Delta + 1, \Delta + 2\}$, and then we can recolor $ww_{1}$ with an element in $\{\tau, \tau + 1, \dots, \kappa\} \setminus \Upsilon(w, w_{1})$ and reduce it to Case~1. Hence, $w_{1}$ is a $4^{+}$-vertex. 
 
\paragraph{($\ast_{2}$)} Since $A(v_{1}) \cap \{3, 4, \dots, \tau - 1\} \neq \emptyset$, we may assume that $3$ is a missing color at $v_{1}$. Recoloring $w_{0}v_{1}$ with $3$ must create a $(3, 2)$-bichromatic cycle containing $w_{0}v_{1}$, for otherwise the resulting coloring is a new acyclic edge coloring of $G - ww_{0}$, and then one of $w_{2}$ and $w_{3}$ must be a $4^{+}$-vertex by a similar argument in the last paragraph. Let $\sigma$ be obtained by uncoloring $ww_{3}$ and coloring $ww_{0}$ with $3$. It is obvious that $\sigma$ is an acyclic edge of $G - ww_{3}$. We can conclude that $\Upsilon(w, w_{3}) \subseteq \{1, 2, \dots, \tau - 1\}$, otherwise we reduce it to Case~1. 

\paragraph{($\ast_{3}$)} Let $\Upsilon(w, w_{3}) = \{p, q\}$. By a similar argument as in the paragraph marked with ($\ast_{1}$), one of $w_{p}$ and $w_{q}$ is a $4^{+}$-vertex. So we may assume that $p = 1$. For $\alpha \in \{\tau, \tau + 1, \dots, \kappa\} \setminus \{\rho'\}$, there exists no $(1, \alpha, w, w_{3})$-critical path, so there exists a $(q, \alpha, w, w_{3})$-critical path and $\{\tau, \tau + 1, \dots, \kappa\} \setminus \{\rho'\} \subseteq \Upsilon(w, w_{q})$. Since $w_{q}$ is a $3$-vertex, thus $\Upsilon(w, w_{q}) \cup \{\rho'\} = \{\tau, \tau + 1, \dots, \kappa\} = \{\Delta, \Delta + 1, \Delta + 2\}$. Hence, there exists a $(1, \rho', w, w_{3})$-critical path, for otherwise we can color $ww_{0}$ with $3$ and recolor $ww_{3}$ with $\rho'$. 

Recall that recoloring $w_{0}v_{1}$ with $3$ creates a $(3, 2)$-bichromatic cycle containing $w_{0}v_{1}$, this implies that $2 \in \mathcal{U}(v_{1})$ and $|A(v_{1}) \cap \{3, 4, \dots, \tau -1\}| \geq 2$. So we may assume that $4$ is also a missing color at $v_{1}$. By a similar argument as in the paragraphs marked with ($\ast_{2}$) and ($\ast_{3}$), there exists a $(1, \rho', w, w_{4})$-critical path, but this contradicts Fact~1. 
\resetcounter
\end{proof}

In \cite{MR3453934}, Wang and Zhang presented the following structural lemma of the $4$-vertices. 
\begin{lemma}[Wang and Zhang \cite{MR3453934}]\label{4Sum}%
Let $G$ be a $\kappa$-deletion-minimal graph with $\kappa \geq \Delta(G) + 2$, and let $w_{0}$ be a $4$-vertex with $N_{G}(w_{0}) = \{w, v_{1}, v_{2}, v_{3}\}$.
\begin{enumerate}[label = (\Alph*)]%
\item\label{4Suma} If $\deg_{G}(w) \leq \kappa - \Delta(G)$, then
\begin{equation}%
\sum_{x \in N_{G}(w_{0})} \deg_{G}(x) \geq 2\kappa - \deg_{G}(w_{0}) + 8 = 2\kappa + 4.
\end{equation}
\item\label{4Sumb} If $\deg_{G}(w) \leq \kappa - \Delta(G) + 1$ and $ww_{0}$ is contained in two triangles, then
\begin{equation}\label{EQ2}%
\sum_{x \in N_{G}(w_{0})} \deg_{G}(x) \geq 2\kappa - \deg_{G}(w_{0}) + 9 = 2\kappa + 5.
\end{equation}
Furthermore, if the equality holds in \eqref{EQ2}, then all the other neighbors of $w$ are $6^{+}$-vertices. \qed
\end{enumerate}
\end{lemma}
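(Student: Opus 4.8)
The plan is to prove both parts by contradiction: assume the displayed degree-sum bound fails and produce an acyclic edge coloring of $G$ with $\kappa$ colors, contradicting $\kappa$-deletion-minimality. In each part the edge to examine is $ww_{0}$, joining $w_{0}$ to its low-degree neighbour $w$. First I would delete $ww_{0}$; by $\kappa$-deletion-minimality $G - ww_{0}$ has an acyclic edge coloring $\phi$ with colours $\{1, \dots, \kappa\}$. Since $\deg(w) \le \kappa - \Delta(G)$ in part (A) (resp. $\kappa - \Delta(G)+1$ in part~(B)), the vertex $w$ misses at least $\Delta(G)+1$ (resp. $\Delta(G)$) colours, so $|A(ww_{0})| = \kappa - \deg(w) - 2 + s$ is large, where $s = |\mathcal{U}(w) \cap \mathcal{U}(w_{0})|$. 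This abundance of free colours at $w$ is the resource to exploit. By Fact~2 no available colour for $ww_{0}$ is valid, so for each $\alpha \in A(ww_{0})$ assigning $\alpha$ to $ww_{0}$ closes a bichromatic cycle.

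Next I would analyse these forced cycles. A cycle created by $ww_{0} = \alpha$ is $(\alpha, \beta)$-bichromatic with $\beta \in \mathcal{U}(w) \cap \mathcal{U}(w_{0})$ common to both endpoints; tracing it, the cycle leaves $w_{0}$ along the edge $w_{0}v_{i}$ with $\phi(w_{0}v_{i}) = \beta$, and the next edge at $v_{i}$ carries $\alpha$, so $\alpha \in \Upsilon(w_{0}, v_{i})$. Hence every available colour is stored at one of the neighbours whose edge to $w_{0}$ is a common colour, giving $|A(ww_{0})| \le \sum_{i \in C}(\deg(v_{i}) - 1)$, where $C$ indexes those $s$ common-coloured neighbours. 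This is exactly the second half of Fact~2 applied to the ordered pair $(w_{0}, w)$, and it yields $\sum_{i \in C} \deg(v_{i}) \ge \kappa - \deg(w) + 2s - 2$.

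This elementary count only gives $\sum_{x \in N_{G}(w_{0})} \deg(x) \ge \kappa + O(1)$, and boosting it to $2\kappa + 4$ is where I expect the main obstacle. The idea is to use the large palette of free colours at $w$: whenever some $v_{i}$ fails to be near-maximum, I would recolour a common edge $ww_{j}$ with a colour missing at $w$, or recolour a suitable edge at the deficient $v_{i}$, so as to destroy one family of critical paths and free a valid colour for $ww_{0}$, a contradiction. Making this precise requires a case analysis organized by $s$ and by how the common colours distribute among $w_{0}v_{1}, w_{0}v_{2}, w_{0}v_{3}$, using Fact~1 (uniqueness of a maximal bichromatic path through a given vertex) to ensure each recolouring genuinely removes the obstruction rather than shifting it, and to forbid two critical paths from sharing an endpoint. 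The outcome should be that each of $v_{1}, v_{2}, v_{3}$ is forced to carry an almost full palette, so that their degrees sum to at least $2\kappa + 4 - \deg(w)$, which is the claimed bound. The delicate part is the bookkeeping that every recolouring stays proper and acyclic; it mirrors the nested subcase structure already used in the proof of \autoref{K_D_4}.

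For part~(B) I would add the hypothesis that $ww_{0}$ lies in two triangles: then two of $v_{1}, v_{2}, v_{3}$ are themselves adjacent to $w$, which forces at least two common colours at $w$ and $w_{0}$ and supplies one extra unit in the count, improving $2\kappa + 4$ to $2\kappa + 5$. For the equality statement I would argue that if some neighbour of $w$ other than $w_{0}$ were a $5^{-}$-vertex, then, again using the free colours at $w$ together with the two triangle edges, one could recolour around $w$ to release a valid colour for $ww_{0}$; hence every other neighbour of $w$ must be a $6^{+}$-vertex. Throughout, \autoref{delta2} (so $\delta(G) \ge 2$) and \autoref{K_D_4} let me discard the degenerate low-degree configurations at the $v_{i}$ that would otherwise clutter the case analysis.
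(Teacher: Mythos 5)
First, note that the paper does not prove \autoref{4Sum} at all: it is quoted verbatim from Wang and Zhang \cite{MR3453934} and marked with \qed, so there is no in-paper proof to match your argument against. Judged on its own terms, your proposal sets up the standard framework correctly (delete $ww_{0}$, take an acyclic coloring of $G-ww_{0}$, invoke Fact~2 to see that every available color closes a bichromatic cycle through a common-colored neighbour, and apply the ordered version of Fact~2 to the pair $(w_{0},w)$). But that count, as you yourself observe, only yields $\sum_{x\in N(w_{0})}\deg(x)\geq \kappa+O(1)$, and everything beyond that point is deferred: ``I would recolour a suitable edge so as to destroy one family of critical paths,'' ``making this precise requires a case analysis,'' ``the outcome should be that each of $v_{1},v_{2},v_{3}$ is forced to carry an almost full palette.'' That case analysis \emph{is} the lemma; without exhibiting, for a deficient $v_{i}$, a concrete recoloring that provably produces a valid color for $ww_{0}$ (and verifying via Fact~1 that no new bichromatic cycle arises), the bound $2\kappa+4$ has not been established. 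So the proposal is a plausible plan, not a proof.

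There is also a concrete error in your treatment of part~\ref{4Sumb}: you assert that the two triangles through $ww_{0}$ ``force at least two common colours at $w$ and $w_{0}$.'' They do not. If $v_{1},v_{2}$ are the common neighbours, the colors $\phi(w_{0}v_{1}),\phi(w_{0}v_{2})$ need not lie in $\mathcal{U}(w)$, and the colors $\phi(wv_{1}),\phi(wv_{2})$ need not lie in $\mathcal{U}(w_{0})$; Fact~2 only forces $|\mathcal{U}(w)\cap\mathcal{U}(w_{0})|\geq 1$ here (since $\deg(w)+\deg(w_{0})<\kappa+2$), and the extra $+1$ in \eqref{EQ2}, as well as the $6^{+}$ conclusion in the equality case, must come from a genuinely finer analysis of how the triangle edges interact with the critical paths, which your sketch does not supply.
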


\section{Proof of \autoref{M}}
Now, we are ready to prove the main result in this paper. 
\begin{proof}[Proof of \autoref{M}]
Suppose that $G$ is a counterexample to the theorem in the sense that $|V| + |E|$ is minimum. It is easy to see that $G$ is a $\kappa$-deletion-minimal graph, where $\kappa := \Delta(G) + 16$. By \autoref{delta2}, the graph $G$ is $2$-connected and $\delta(G) \geq 2$. 

Since $G$ is $\kappa$-deletion-minimal, it has the following local structures. 
\begin{enumerate}[label = (C\arabic*)]
\item\label{C1} Every $2$-vertex is adjacent to two $20^{+}$-vertices (\autoref{2++edge}). 
\item\label{C2} Every $3$-vertex is adjacent to three $18^{+}$-vertices (\autoref{3+vertex}). 
\item\label{C3} Every $18$-vertex is adjacent to at most one $3$-vertex (\autoref{Good-3-vertex}). 
\item\label{C4} Every vertex is adjacent to at least two $4^{+}$-vertices (\autoref{K_D_4}). 
\item\label{C5} Every $4$-vertex is adjacent to four $10^{+}$-vertices, or a $9^{-}$-vertex and three $22^{+}$-vertices (\autoref{4Sum}~\ref{4Suma}). 
\end{enumerate}

Suppose that $G$ contains a $5$-vertex $v$ adjacent to three $7^{-}$-vertices. Let $N_{G}(v) = \{u, v_{1}, v_{2}, v_{3}, v_{4}\}$ with $\deg(u) \leq 7, \deg(v_{1}) \leq 7$ and $\deg(v_{2}) \leq 7$. By the minimality of $G$, the graph $G - uv$ has an acyclic edge coloring $\phi$ with $\Delta(G) + 16$ colors. Moreover, when we choose the acyclic edge coloring $\phi$, we assume that the number of common colors on the edges incident with $u$ and $v$ is minimum, that is, $|\mathcal{U}(u) \cap \mathcal{U}(v)| = m$ is minimum among all the acyclic edge colorings of $G - uv$. We can easily obtain that $m \geq 1$ from Fact 2. Let $N_{G}(u) = \{v, u_{1}, \dots, u_{t}\}, t \leq 6$.

The first case: $m = 1$. Assume that $uu_{1}$ and $vv_{1}$ have the same color $1$. Note that all the available colors for $uv$ are invalid, hence there exists a $(1, \alpha, u, v)$-critical path for each $\alpha$ in $A(uv)$, and thus $A(uv) \subseteq \mathcal{U}(u_{1})$. But $|A(uv)| \geq \kappa - (6 + 4 - 1) > \Delta$, thus $|\mathcal{U}(u_{1})| \geq |A(uv)| + 1 > \Delta$, a contradiction. 

The second case: $m \geq 2$. Assume that $uu_{i}$ and $vv_{i}$ have the same color $i$ for each $i \in \{1, 2, \dots, m\}$. For each $\alpha \in A(uv)$, there exists an $(i_{\alpha}, \alpha, u, v)$-critical path for some $i_{\alpha} \in \{1, 2, \dots, m\}$. Note that $|A(uv)| \geq \kappa - 10 + m \geq \kappa - 8 \geq \Delta + 8$, 
\[
\deg(v_{1}) + \deg(v_{2}) + \deg(v_{3}) + \deg(v_{4}) - 4 < 2 |A(uv)|,
\]
thus there exists an available color $\alpha^{*}$ such that it appears exactly once in $\mathbb{S}$, where $\mathbb{S}$ is defined as $\mathbb{S} : = \Upsilon(v, v_{1}) \uplus \Upsilon(v, v_{2}) \uplus \Upsilon(v, v_{3}) \uplus \Upsilon(v, v_{4})$. Without loss of generality, we may assume that it appears in $\mathcal{U}(v_{1})$, and then there exists a $(1, \alpha^{*}, u, v)$-critical path. Now, we revise $\phi$ by recoloring $vv_{2}$ with $\alpha^{*}$, which yields a new acyclic edge coloring of $G - uv$, but it contradicts the minimality of $m$. Therefore, the graph $G$ does not contain a $5$-vertex adjacent to three $7^{-}$-vertices. That is,  

\begin{enumerate}
\item[(C6)]\label{C6} every $5$-vertex is adjacent to at least three $8^{+}$-vertices. 
\end{enumerate}

{\bf Discharging Part.} In the following, we may assume that $G$ has been embedded on the plane such that every edge is crossed by at most one other edge. Moreover, the number of crossings is as small as possible. We treat each of the crossings as a vertex and obtain an {\em associated plane graph} $G^{\dagger}$. 

Since the number of crossings is as small as possible in the embedding, we have the following claim.
\begin{claim}
Every $2$-vertex is incident with two $4^{+}$-faces in $G^{\dagger}$. 
\end{claim}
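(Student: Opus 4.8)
The plan is to argue by contradiction. Suppose some $2$-vertex $v_{0}$, with $N_{G}(v_{0}) = \{v, w\}$, is incident with a face $f$ of length at most $3$ in $G^{\dagger}$. First I would observe that the two faces at $v_{0}$ cannot be loops or digons: since $v_{0}$ has degree $2$ in $G^{\dagger}$, a digon would force its two edges to end at a common vertex $a=b$, which means either $v=w$ (impossible, as they are distinct neighbours) or a single original edge crossed twice (impossible by $1$-planarity). Hence both faces at $v_{0}$ have length at least $3$, and $f$ must be a $3$-face. Writing $a,b$ for the two neighbours of $v_{0}$ in $G^{\dagger}$, where $v_{0}a$ is the initial piece of the original edge $v_{0}w$ and $v_{0}b$ the initial piece of $v_{0}v$, the $3$-face forces $ab\in E(G^{\dagger})$.

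Next I would split into cases according to how many of $v_{0}v,v_{0}w$ are crossed. If neither is crossed, then $a=w$ and $b=v$, so $ab\in E(G^{\dagger})$ is the original edge $vw$ and $v_{0}vw$ is a triangle in $G$, contradicting that $G$ is triangle-free. If both are crossed, then $a,b$ are crossing vertices lying on original edges $e_{a},e_{b}$; for $ab$ to be an edge of $G^{\dagger}$ a single original edge would have to pass through both $a$ and $b$, and that edge would then cross each of $v_{0}w,v_{0}v$, i.e. be crossed twice, violating $1$-planarity. The remaining case is that exactly one edge, say $v_{0}w$, is crossed at a point $p=a$ by an original edge $e$. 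Since $e$ is crossed only once, the neighbours of $p$ along $e$ in $G^{\dagger}$ are exactly the two endpoints of $e$, so $pb=pv\in E(G^{\dagger})$ forces $v$ to be an endpoint of $e$, i.e. $e=vz$ for some $z\notin\{v_{0},w\}$. Thus the offending configuration is an empty triangular face $T=v_{0}pv$ bounded by the uncrossed edge $v_{0}v$, the initial piece of $v_{0}w$, and the piece $vp$ of $vz$.

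The hard part will be this last case: ruling out the configuration in which the edge $vz$ incident to the neighbour $v$ crosses $v_{0}w$ and cuts off the empty triangle $T$. Here I would invoke the minimality of the number of crossings in the chosen embedding, aiming to produce a $1$-planar drawing of $G$ with strictly fewer crossings. The guiding idea is that the crossing at $p$ is redundant: $v_{0}$ and $v$ are already joined by the uncrossed edge $v_{0}v$, the interior of $T$ is empty, and $vz$ is crossed by nothing except $v_{0}w$; so one should be able to reroute $vz$ (using the shared endpoint $v$ of $v_{0}v$ and $vz$ and the empty interior of $T$) to eliminate its crossing with $v_{0}w$ without introducing any new crossing, contradicting the minimality. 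The delicate point — and the step I expect to require the most care — is verifying that the reroute does not create a fresh crossing with some other edge incident to the high-degree vertex $v$ (recall $v$ is a $20^{+}$-vertex by \autoref{2++edge}); the emptiness of $T$ together with the fact that $vz$ meets no edge other than $v_{0}w$ is exactly what should make this local surgery clean, but pinning down the reroute precisely is the crux of the proof. The symmetric case (with the roles of $v$ and $w$ exchanged) is handled identically, and together these cases establish that $v_{0}$ is incident with two $4^{+}$-faces.
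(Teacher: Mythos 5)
Your case analysis is the right one, and it matches the paper's intent: the paper gives no argument for this claim beyond invoking the minimality of the number of crossings together with triangle-freeness, which is exactly the combination you deploy. The step you leave open does go through, and here is how to pin it down. In the one-crossing case, let $f'$ be the second face at $v_{0}$ in $G^{\dagger}$; since $v_{0}$ has degree $2$ in $G^{\dagger}$, the non-$T$ sides of the edge $v_{0}v$ and of the segment $v_{0}p$ of $v_{0}w$ both lie on $f'$, and these two boundary pieces meet at the unique corner of $v_{0}$ outside $T$. Erase the arc of $vz$ from $v$ to a point $p^{+}$ just beyond $p$ on the $z$-side, and replace it by an arc that leaves $v$ into the corner of $f'$ adjacent to $vv_{0}$, runs through the interior of $f'$ alongside $vv_{0}$, around $v_{0}$, then alongside $v_{0}p$, and finally attaches to $p^{+}$ from the corner of $p$ between the $v_{0}$-end of $v_{0}w$ and the $z$-end of $vz$ (that corner lies on $f'$ because $f'$ is the non-$T$ side of $v_{0}p$). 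The new arc lies in the interior of a single face except at its endpoints, so it crosses nothing --- in particular it meets no other edge at the high-degree vertex $v$, since it occupies the rotation slot at $v$ immediately adjacent to $vv_{0}$ --- and the surviving piece of $vz$ from $p^{+}$ to $z$ was already crossing-free because $vz$ was crossed only at $p$. Hence the crossing at $p$ disappears and no new crossing appears, contradicting minimality. Two small slips elsewhere are worth fixing: a digon at $v_{0}$ whose two edges end at a common crossing vertex would mean that the adjacent edges $v_{0}v$ and $v_{0}w$ cross each other, not that a single edge is crossed twice; and in the two-crossing case the segment $ab$ could a priori also lie on $v_{0}v$ or $v_{0}w$ themselves. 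Both residual subcases are crossings between adjacent edges, which are excluded in a crossing-minimal $1$-planar drawing by the standard swap of the two initial arcs at the shared endpoint.
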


Since $G$ is triangle-free and every $2$-vertex is incident with two $4^{+}$-faces in $G^{\dagger}$, we have the following statement. A similar statement has been proven in \cite{MR3397083}. 
\begin{claim}
Every $\ell$-vertex is incident with at most $\Lfloor \frac{2(\ell - \lambda)}{3} \Rfloor$ $3$-faces in $G^{\dagger}$, where $\lambda$ is the number of adjacent $2$-vertices. 
\end{claim}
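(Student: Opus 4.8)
The plan is to count the faces around a fixed $\ell$-vertex $v$ in $G^{\dagger}$ and bound how many of them can be $3$-faces. Since $G$ is $2$-connected, so is $G^{\dagger}$, and $v$ is incident with exactly $\ell$ face-corners, giving a cyclic sequence of faces $f_{1}, f_{2}, \dots, f_{\ell}$, where $f_{i}$ lies between the two consecutive edges $e_{i}, e_{i+1}$ emanating from $v$ (indices modulo $\ell$). The starting observation is that, because $G$ is triangle-free, every $3$-face of $G^{\dagger}$ must contain a crossing vertex: a $3$-face on three original vertices would be a triangle of $G$. Consequently, if $f_{i}$ is a $3$-face, then at least one of the edges $e_{i}, e_{i+1}$ runs from $v$ to a crossing vertex.

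The key local lemma I would establish is: if $e_{j} = vz$ with $z$ a crossing vertex, then at most one of the two faces $f_{j-1}, f_{j}$ incident with $e_{j}$ is a $3$-face. To see this, write $z$ as the crossing of the original edge $vw$ (incident with $v$) and a second original edge $pq$; since each edge is crossed at most once, $v, w, p, q$ are all original vertices. The cyclic order of the four edges at $z$ is $v, p, w, q$, so the two faces flanking $vz$ would, if both $3$-faces, be the triangles $vzp$ and $vzq$, forcing $vp, vq \in E(G)$. Together with $pq \in E(G)$ this produces the triangle $vpq$ in $G$, contradicting triangle-freeness. Hence at most one of $f_{j-1}, f_{j}$ is a $3$-face.

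From this I deduce that no three consecutive faces around $v$ are all $3$-faces: if $f_{i}, f_{i+1}, f_{i+2}$ were all $3$-faces, then the middle one $f_{i+1} = vzz'$ forces, by triangle-freeness, one of its edges $e_{i+1}, e_{i+2}$ to reach a crossing vertex, and whichever it is, the local lemma forbids both faces on its two sides from being $3$-faces, a contradiction. To bring in the $\lambda$ adjacent $2$-vertices, I invoke the preceding claim that every $2$-vertex is incident with two $4^{+}$-faces: for each $2$-vertex neighbour the corresponding edge at $v$ has a $4^{+}$-face on each of its two sides. I would then delete these $\lambda$ edges at $v$; each deletion merges two $4^{+}$-faces into one $4^{+}$-face, leaves every $3$-face untouched, and preserves the property that no three consecutive faces are $3$-faces. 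This reduces the degree of $v$ to $\ell - \lambda$ without changing the number of $3$-faces incident with $v$.

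Finally I would apply the elementary combinatorial fact that a cyclic binary string of length $d$ with no three consecutive $1$'s contains at most $\lfloor 2d/3 \rfloor$ ones; with $d = \ell - \lambda$ this yields the desired bound $\lfloor 2(\ell - \lambda)/3 \rfloor$. The delicate point, and the step I expect to require the most care, is the interaction between $2$-vertices and crossings: one must ensure that the edge from $v$ to a $2$-vertex genuinely has a $4^{+}$-face on both of its sides (handling the possibility that this edge is itself crossed), and that deleting it neither destroys a counted $3$-face nor creates three consecutive $3$-faces. This is precisely where the minimum-crossing hypothesis on the embedding, channelled through the preceding claim, does the work.
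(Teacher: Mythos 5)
The paper itself gives no proof of this claim (it only remarks that it follows from triangle-freeness and the preceding claim, citing Song and Miao), so your reconstruction is the right thing to attempt, and its core is correct: every $3$-face of $G^{\dagger}$ contains a crossing vertex; if $z$ is the crossing of $vw$ with $pq$, then the two faces of $G^{\dagger}$ flanking the half-edge $vz$ cannot both be triangles (that would force $vp,vq\in E(G)$ and hence the triangle $vpq$); therefore no three consecutive faces around $v$ are $3$-faces, and the cyclic-string count gives the bound $\lfloor 2\ell/3\rfloor$. All of that is sound and is surely the intended argument.

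The genuine gap is exactly the point you flag and then wave at: the treatment of a $2$-vertex neighbour $u$ whose edge $vu$ is crossed. If $vu$ is crossed at $z'$ by $pq$, then in the rotation at $z'$ the four half-edges alternate $v,p,u,q$, so the two faces flanking $vz'$ (the ones you need to be $4^{+}$-faces before deleting the edge) are \emph{disjoint} from the two faces flanking $z'u$, and only the latter pair is incident with $u$. Hence the preceding claim, which speaks of the two faces incident with the $2$-vertex, says nothing about the faces at the $v$-end of this edge. Triangle-freeness only rules out both of them being triangles; a single triangle $vz'p$ with $vp\in E(G)$ is not excluded by anything you have invoked, and the rerouting that proves the preceding claim (sliding the crossing edge around the degree-$2$ vertex $u$ through its second face) has no analogue at the high-degree end $v$. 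If such a triangle occurs, your deletion of $vz'$ destroys a $3$-face that should have been counted, so the reduced configuration undercounts and the bound $\lfloor 2(\ell-\lambda)/3\rfloor$ is not established. To close the argument you must either prove that in a crossing-minimal drawing no edge incident with a $2$-vertex is involved in a $3$-face at its \emph{other} end (equivalently, that the configuration ``$vu$ crossed by $pq$ at $z'$, $vp\in E(G)$, $vz'p$ a face'' is impossible), or replace the deletion step by a counting argument that only credits the $2$-vertex edges you can actually certify as flanked by two $4^{+}$-faces; as written, the assertion that the preceding claim ``does the work'' is not substantiated.
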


We use the discharging method to derive a contradiction. Here, we need the following rewritten Euler's formula for the associated plane graph $G^{\dagger}$: 
\begin{equation}%
\sum_{v \in V(G^{\dagger})}(\deg(v) - 4) + \sum_{f \in F(G^{\dagger})}(\deg(f) - 4) = - 8.
\end{equation}

At first, we assign the initial charge of every vertex $v$ to be $\deg(v) - 4$ and the initial charge of every face $f$ to be $\deg(f) - 4$. Next, we design appropriate discharging rules and redistribute charges among vertices and faces, such that the final charge of every vertex and every face is nonnegative, which leads to a contradiction. Note that all the adjacencies between vertices in the discharging rules are refer to the adjacencies between vertices in $G$, not in $G^{\dagger}$.

{\bf Discharging rules:}
\begin{enumerate}[label = (R\arabic*)]
\item every $2$-vertex receives $1$ from each adjacent vertex; 
\item every $3$-vertex receives $\frac{3}{2}$ from the adjacent $18$-vertex;
\item every $3$-vertex receives $\frac{1}{2}$ from each adjacent $19^{+}$-vertex; 
\item every $3$-vertex receives $\frac{1}{2}$ from each incident $5^{+}$-face; 
\item every $3$-face receives $\frac{1}{2}$ from each incident non-crossing vertex; 
\item every non-crossing $4$-vertex receives $\frac{1}{4}$ from each adjacent vertex if it is adjacent to four $10^{+}$-vertices;
\item every non-crossing $4$-vertex receives $\frac{1}{3}$ from each adjacent $22^{+}$-vertex if it is adjacent to a $9^{-}$-vertex and three $22^{+}$-vertices;
\item every $5$-vertex receives $\frac{1}{6}$ from each adjacent $8^{+}$-vertex.
\end{enumerate}

If $w$ is an arbitrary $2$-vertex, then its final charge is $2 - 4 + 2 \times 1 = 0$. Let $w$ be an arbitrary $3$-vertex. Note that $w$ is incident with at most two $3$-faces. If $w$ is incident with at most one $3$-face, then its final charge is at least $3 - 4 + \frac{3}{2} - \frac{1}{2} = 0$. On the other hand, if $w$ is incident with exactly two $3$-faces, then it is incident with a $5^{+}$-face and it receives $\frac{1}{2}$ from the $5^{+}$-face, and then its final charge is at least $3 - 4 + \frac{3}{2} + \frac{1}{2} - 2 \times \frac{1}{2} = 0$. Hence, the final charge of an arbitrary $3$-vertex is nonnegative. 

It is obvious that the final charge of a crossing $4$-vertex is zero. Let $w$ be an arbitrary non-crossing $4$-vertex. If $w$ is adjacent to four $10^{+}$-vertices, then its final charge is at least $4 - 4 + 4 \times \frac{1}{4} - 2 \times \frac{1}{2} = 0$. If $w$ is adjacent to a $9^{-}$-vertex and three $22^{+}$-vertices, then its final charge is at least $4 - 4 + 3 \times \frac{1}{3} - 2 \times \frac{1}{2} = 0$. 

If $w$ is a $5$-vertex, then it is adjacent to at least three $8^{+}$-vertices, then its final charge is at least $5 - 4 + 3 \times \frac{1}{6} - 3 \times \frac{1}{2} = 0$. 

If $w$ is an arbitrary $\ell$-vertex with $\ell = 6, 7$, then its final charge is at least $\ell - 4 - \frac{2\ell}{3} \times \frac{1}{2} \geq 0$. 

If $w$ is an arbitrary $\ell$-vertex with $\ell = 8, 9$, then its final charge is at least $\ell - 4 - \frac{2\ell}{3} \times \frac{1}{2} - \frac{1}{6} \ell \geq 0$.

Let $w$ be an arbitrary $10^{+}$-vertex in the following. Suppose that $w$ is adjacent to at least one $2$-vertex. Let $\lambda$ be the number of adjacent $2$-vertices. By \autoref{2+edge}, it is adjacent to at least seventeen $18^{+}$-vertices, thus its final charge is at least $\ell - 4 - \frac{2(\ell - \lambda)}{3}  \times \frac{1}{2} - \lambda \times 1 - (\ell - \lambda -17) \times \frac{1}{2} = \frac{\ell - \lambda}{6} + \frac{9}{2} > 0$. So we may assume that $w$ is not adjacent to any $2$-vertex. 

\begin{itemize}
\item If $w$ is an $\ell$-vertex with $\ell \geq 22$, then its final charge is at least $\ell - 4 - \left\lfloor \frac{2\ell}{3} \right\rfloor \times \frac{1}{2} - \ell \times \frac{1}{2} \geq 0$. 

\item If $w$ is an $\ell$-vertex with $\ell = 19, 20, 21$, then its final charge is least $19 - 4 - \left\lfloor \frac{2 \times 19}{3} \right\rfloor \times \frac{1}{2} - 17 \times \frac{1}{2} - 2 \times \frac{1}{4} = 0$, $20 - 4 - \left\lfloor \frac{2 \times 20}{3} \right\rfloor \times \frac{1}{2} - 18 \times \frac{1}{2} - 2 \times \frac{1}{4} = 0$, or $21 - 4 - \left\lfloor \frac{2 \times 21}{3} \right\rfloor \times \frac{1}{2} - 19 \times \frac{1}{2} - 2 \times \frac{1}{4} = 0$. 

\item If $w$ is an $18$-vertex, then it is adjacent to at most one $3$-vertex, and then its final charge is at least $18 - 4 - \frac{2 \times 18}{3} \times \frac{1}{2} - \frac{3}{2} - 17 \times \frac{1}{4} > 0$. 

\item If $w$ is an $\ell$-vertex with $10 \leq \ell \leq 17$, then it is only adjacent to $4^{+}$-vertices, and then its final charge is at least $\ell - 4 - \frac{2\ell}{3} \times \frac{1}{2} - \ell \times \frac{1}{4} > 0$.
\end{itemize}

It is obvious that every $3$-face has the final charge $3 - 4 + 2 \times \frac{1}{2} = 0$. Every $4$-face has the final charge as its initial charge, zero. By \ref{C2}, there is no consecutive $3$-vertices lying on a face boundary, thus every $5^{+}$-face $f$ has final charge at least $\deg(f) - 4 - \Lfloor \frac{\deg(f)}{2} \Rfloor \times \frac{1}{2} \geq 0$.

Now, the final charge of every vertex and every face is nonnegative, which derives the desired contraction. 
\resetcounter
\end{proof}
\vskip 0mm \vspace{0.3cm} \noindent{\bf Acknowledgments.} This project was supported by the National Natural Science Foundation of China (11101125) and partially supported by the Fundamental Research Funds for Universities in Henan.

\end{document}